\newcommand{\set}[1]{\,\left\{#1\right\}}
\newcommand{\setd}[2]{\,\left\{#1\ \colon\ #2\right\}}
\newtheorem{theorem}{Theorem}[section]
\newtheorem{corollary}[theorem]{Corollary}
\newtheorem{lemma}[theorem]{Lemma}
\newtheorem{proposition}[theorem]{Proposition}
\newtheorem{definition}[theorem]{Definition}
\newtheorem{remark}[theorem]{Remark}
\newcommand{\Vol}{\operatorname{Vol}}
\newcommand{\isod}{D}
\newcommand{\lin}{\mathrm{lin}}
\newcommand{\rad}{\mathrm{rad}\,}
\newcommand{\finsup}{\mathbb{F}}
\newcommand{\uf}{\operatorname{uf}}
\def\G{\mathscr{G}}
\def\F{\mathscr{F}}
\newcommand{\RR}{\mathbb{R}}
\newcommand{\ZZ}{\mathbb{Z}}
\newcommand{\NN}{\mathbb{N}}
\title{Controlled coarse homology and isoperimetric inequalities}
\author{Piotr W. Nowak}
\address{Department of Mathematics, Texas A\&M University, College Station, TX 77843, USA}
\email{pnowak@math.tamu.edu}
\author{J\'an \v{S}pakula}
\address{Fachbereich Mathematik, Universit\"at M\"{u}nster,
Einsteinstr.\ 62, 48149 M\"{u}nster, Germany}
\email{jan.spakula@uni-muenster.de}
\subjclass[2000]{Primary; Secondary;}
\keywords{}
\date{May 2009}
\begin{document}
\begin{abstract}
We study a coarse homology theory with prescribed growth conditions. 
For a finitely generated group $G$ with the word length metric this homology theory 
turns out to be related to amenability of $G$. We characterize vanishing of a certain
fundamental 
class in our homology in terms of
an isoperimetric inequality on $G$ and show that on any group at most linear
control is needed for this class to vanish. The latter is a 
homological version of the classical Burnside problem for infinite groups, with
a positive solution. As applications
we characterize existence of primitives of the volume form with prescribed growth
and show that coarse homology classes obstruct weighted Poincar\'{e} inequalities.

\end{abstract}

\maketitle

\section{Introduction}
Isoperimetric inequalities are a fundamental tool in analysis and differential
geometry. Such inequalities, including Sobolev and
Poincar\'{e} inequalities and their numerous generalizations,
 have a large number of applications in various settings. In this paper we 
are interested in a discrete isoperimetric inequality studied 
by \.{Z}uk \cite{zuk} and later by Erschler \cite{erschler}. It is of the form
$$\#A \le C \sum_{x\in \partial A} f(d(x,x_0))$$
for a fixed, non-decreasing real function $f$, a fixed point $x_0$ and a constant $C>0$. 
The purpose of our work is to explore the connection between the inequality and
one of the  fundamental large-scale invariants, coarse homology.

Coarse homology and cohomology were first introduced by Roe in
\cite{roe-coarse.coho} for the purposes of index theory and allowed to formulate
the coarse index on open manifold using assembly maps from coarse $K$-homology to the $K$-theory of appropriate $C^*$-algebras (see also \cite{block-weinberger-survey,roe-cbms,roe-lectures}). 
This approach proved
to be very successful in attacking  various
problems in geometry and topology of manifold such as the Novikov conjecture, positive scalar curvature problem
or the zero-in-the-spectrum conjecture.
Block and Weinberger  \cite{block-weinberger}
introduced and studied a uniformly finite homology theory, where they
considered only those chains in Roe's coarse homology whose coefficients  are bounded. 
This homology theory turned out to have
many applications since vanishing of the $0$-dimensional uniformly finite homology group
characterizes amenability. Using this fact Block and Weinberger related it to 
the existence of aperiodic tilings and positive scalar curvature metrics on complete Riemannian
manifolds \cite{block-weinberger}. Later Whyte used it to show the existence of bijective 
quasi-isometries (i.e. bilipschitz equivalences) between non-amenable groups \cite{whyte}. Other applications
can be found in e.g.\ \cite{attie-block-weinberger,attie-hurder,whyte-ahat}.

The homology theory we study here is a controlled homology theory for spaces
with bounded geometry, where the upper bound on the chains' growth type is specified 
and represented by a fixed, non-decreasing function $f$.  The case when $f$ is constant gives
the uniformly finite homology of Block and Weinberger mentioned above, but  our main
interest is in the case when the chains are unbounded (see Section \ref{section : definitions} for a precise definition). Roe's coarse homology is also defined using unbounded chains, but 
without any control on the coefficients' growth (it is a coarsened version of the locally finite homology). 
Our homology is a quasi-isometry invariant and contains information about the large-scale 
structure of a metric space.

For the most part, we
restrict our attention to the case of finitely generated groups
and we are mainly interested in vanishing of the fundamental class $[\Gamma]=\sum_{x\in
 \Gamma}[x]$ in the $0$-th homology group $H_0^f(\Gamma)$.  The main 
 theme of our work is that the vanishing of this class describes ``how amenable''
 a group is, through the isoperimetric inequality above. In the light of  this philosophy and 
 \cite{zuk} one can expect 
 that on any group killing the fundamental
 class should require a 1-chain with at most linear growth. Our first result  
 confirms this with a direct construction of a linearly growing $1$-chain whose boundary
 is the fundamental class.
This fact restricts the class of growth types that are potentially
interesting to those with growth between constant and linear. As we explain 
at the end of Section \ref{section : an explicit 1-chain}, the theorem can be viewed as a weaker, homological version of the classical
Burnside problem for infinite groups with a positive solution. 
To the best of our knowledge this is the strongest 
result in this direction which is true for all infinite, finitely generated groups.

The result in  section 
\ref{section : homology and inequalities} states that the isoperimetric inequality holds on 
a given space if and only if the fundamental class vanishes in the
corresponding coarse homology group and it is the second main result of the paper.
This characterization generalizes the result of \cite{block-weinberger} which 
arises as the case of bounded control and also gives a homological perspective on 
 \.{Z}uk's result \cite{zuk}.
Moreover, using the results of Erschler \cite{erschler}
we obtain explicit examples of groups for which the fundamental
class bounds 1-chains growing much slower than linearly.

In section \ref{section : examples}
we discuss explicit examples of amenable groups for which 
we can compute explicitly the control function $f$, for which
the fundamental class vanishes in $H_0^f(\Gamma)$. 
The estimates are obtained using invariants such 
as isodiametric and isoperimetric profiles.
These invariants are well-known and widely studied (see
e.g.\ \cite{erschler,nowak-exactiso,pittet-saloff-coste-downunder,
pittet-saloff-coste-survey,saloff-coste-notices,saloff-coste-survey}) and each of them
can be used to estimate the growth
of a 1-chain which bounds the fundamental class. 

Our theory has some interesting applications to the problem of finding a primitive  of a differential 
form on a universal cover of a compact manifold with 
prescribed growth. The question of finding such primitives was studied 
by Sullivan \cite{sullivan}, Gromov \cite{gromov-proceedings} and Brooks \cite{brooks} in a setting which  relates to that of \cite{block-weinberger}, and later
in \cite{sikorav,zuk}.
Our results give exact estimates on this growth in the case when the group in question
is amenable.
Namely, we can use all of the above invariants
as large-scale obstructions to finding such primitives. For instance, if $\Gamma$ is an amenable group which 
has finite asymptotic dimension of linear type then the volume form on the universal cover
of a compact manifold $M$ with the fundamental group $\Gamma$ cannot have a primitive 
of growth slower than linear. Our methods in particular give a different proof
of a theorem of Sikorav characterizing the growth of primitives of differential forms \cite{sikorav}.

Another connection is to weighted Poincar\'{e} inequalities
studied by Li and Wang \cite{li-wang}. These inequalities are intended as a weakening
of a lower bound on the positive spectrum of the Laplacian and were used in \cite{li-wang}
to prove various rigidity results for open manifolds. It turns out that our isoperimetric inequalities and
weighted Poincar\'{e} inequalities are closely related. We use this fact to show that the 
vanishing of the fundamental class is an obstructions to Poincar\'{e} inequalities with certain 
weights, 
which do not decay fast enough.

In a future paper we will use the theory presented here together with surgery theory to study 
positive scalar curvature on open manifolds, Pontrjagin classes and distortion of diffeomorphisms.\\

We are grateful to John Roe, Shmuel Weinberger and Guoliang Yu for valuable discussions and suggestions. 

\setcounter{tocdepth}{1}
\tableofcontents

\section{Coarse homology with growth conditions}
\label{section : definitions}
We will be considering metric spaces which are (uniformly) discrete in the sense that
 there exists a constant $C>0$ such that $d(x,y)\ge C$
for any distinct points $x,y$ in the given space. Additionally we assume that all our spaces have bounded geometry and are quasi-geodesic. A discrete metric space $X$ has bounded
geometry if for every $r>0$ there exists a constant $N_r>0$ such that
$\# B(x,r)\le N_r$ for every $x\in X$.  $X$ is said to be quasi-geodesic 
if for every $x,y\in X$ there exists a sequence of points $\set{x=x_0,x_1,\dots,x_{n-1}, x_n=y}$ such that $d(x_i,x_{i+1})\le 1$ and $n\le d(x,y)$ (this a slightly stronger definition than in the literature, but essentially equivalent).
Examples of such spaces are finitely generated groups with word length metrics.

\subsection{Chain complex, homology and large-scale invariance}
Let us first introduce some notation. Let $X$ be a discrete metric space with bounded geometry 
and let $e\in X$ be
fixed.  For $\overline{x}=(x_1,\dots,x_n), \overline{y}=(y_1,\dots,y_n)\in X^n$ we define the distance
$$d(\overline{x},\overline{y})=\max_i d(x_i,y_i),$$
and the length of $\overline{x}$ as $\vert \overline{x}\vert=d(\overline{x},\overline{e})$ where
$\overline{e}=(e,\dots, e)$. Let $f:[0,\infty)\to[0,\infty)$ be a non-decreasing
function. For technical reasons we will assume that $f(0)=1$ and that for every $K> 0$ there exists $L>0$ such that
\begin{equation}\tag{$f_1$}
\label{equation : f has the same growth under shifts}
f(t+K)\le Lf(t)
\end{equation}
for every $t>0$. 
We will also assume that for every $K>0$
there exists a constant $L>0$ such that
\begin{equation}\tag{$f_2$}\label{equation : f(Ct) le Df(t)}
f(K t)\le Lf(t)
\end{equation}
for every $t>0$. Both conditions are mild and are satisfied by common sub-linear growth types such as powers
and logarithms.

We shall define homology with coefficients in $\RR$, but the definition of
course makes sense for any normed ring, in particular $\ZZ$.
We represent the chains in two ways: As a formal sum $c=\sum_{\overline{x}\in
X^{n+1}}c_{\,\overline{x}}\overline{x}$, $c_{\,\overline{x}}\in \RR$; or as a
function $\psi:X^{n+1}\to \RR$. In both cases, we think of $\overline{x}\in
X^{n+1}$ as of simplices. In particular, we require the property
$\psi(\overline{x})=(-1)^{N(\sigma)}\psi(\sigma(\overline{x}))$, where $\sigma$
is a permutation of
the simplex $\overline{x}\in X^{n+1}$ and $N(\sigma)$ is the number of
transpositions needed 
to obtain $\sigma(\overline{x})$ from $\overline{x}$. For $1$-chains this simply
reduces to 
$\psi(x,y)=-\psi(y,x)$.

Given a chain
$c=\sum_{\overline{x}\in X^{n+1}}c_{\,\overline{x}}\overline{x}$ the
\emph{propagation} $\mathscr{P}(c)$ is the
smallest number $R$ such that $c_{\,\overline{x}}=0$ whenever $d(\overline{x},\Delta_{n+1})\ge R$ for
$\overline{x}\in X^{n+1}$, where $\Delta_{n+1}$ denotes the diagonal in $X^{n+1}$. 

We now define the chain complex. Denote
$$C^{f}_n(X)=\setd{c=\sum_{\overline{x}\in X^{n+1}}c_{\,\overline{x}}\overline{x}}
{\mathscr{P}(c)<\infty\text{ and } \vert c_{\,\overline{x}}\vert \le K_c
  f(\vert \overline{x}\vert)},$$ where $K_c$ is a constant which depends on
$c$ and the coefficients $c_{\,\overline{x}}$ are real numbers. One can easily check using 
(\ref{equation : f has the same growth under shifts}) that
$C_n^f(X)$ is a linear space which does not depend on the choice of the base
point.

We define a differential
$\partial:C^f_n(X)\to C^f_{n-1}(X)$ in a standard way on simplices as
$$\partial([x_0,x_1,\dots,x_n])=\sum_{i=0}^n (-1)^i [x_0,\dots,\hat{x_i},\dots, x_n]$$
and by extending linearly. It is easy to check that 
(\ref{equation : f has the same growth under shifts})
guarantees that the differential is well-defined.
Thus we have a chain complex $\set{C_i^f(X),\partial}$ 
and we denote its homology by $H_*^f(X)$. In particular, if $f\equiv \mathrm{const}$, we obtain uniformly finite homology of Block and Weinberger \cite{block-weinberger} (with real coefficients),
which we denote by $H_*^{\uf}(X)$ and if $f$ is linear we will write $H^{\lin}_*(X)$.

We now turn to functorial properties of  $H_*^f(X)$. First, we recall standard definitions.
\begin{definition}
Let $X$ and $Y$ be metric spaces. A map $F:X\to Y$ is a \emph{coarse equivalence}
if there exist non-decreasing functions $\rho_-,\rho_+:[0,\infty)\to[0,\infty)$ such that
$$\rho_-(d_X(x,y))\le d_Y(F(x),F(y))\le \rho_+(d_X(x,y))$$
for all $x,y\in X$ and there exists a $C>0$ such that for every $y\in Y$ there is an $x\in X$
satisfying $d_Y(F(x),y)\le C$.

The map $F$ is a \emph{quasi-isometry} if both $\rho_-$ and $\rho_+$ can be chosen to be 
affine. It is a coarse (quasi-isometric) \emph{embedding} if it is a coarse (quasi-isometric)
equivalence with a subset of $Y$.

Two coarse maps $F,F':X\to Y$ are said to be \emph{close} if $d(F(x),F'(x))\le C$ for
some constant $C>0$ and every $x\in X$. 
\end{definition}
Since we are only concerned with the asymptotic behavior, we can without loss of generality assume that
$\rho_-$ is strictly increasing and thus has an inverse.
Let $F:X\to Y$ be a coarse embedding. Define
$$F_*\left( \sum c_x x\right)=\sum c_xF(x).$$
If $c\in C_i^f(X)$ then  
$$\vert c_{F(x)}\vert \le Kf(\vert x\vert )\le K f(\rho_-^{-1}(\vert F(x)\vert)),$$
since $f$ is non-decreasing, where $K$ depends 
on $f$ and $F$ only. Thus we obtain a map on the level of chains, 
$$F_* :C_i^f(X)\to C_i^{f\circ\rho_-^{-1}}(Y).$$
In particular, if $F$ is a quasi-isometric embedding, we have the induced map
$F_*:C_i^f(X)\to C_i^f(Y)$.
We denote also by $F_*$ the induced map on homology,
$$F_*:H_i^f(X)\to H_i^{f\circ\rho_-^{-1}}(Y).$$ 
\begin{proposition}
Let $F, F':X\to Y$ be quasi-isometric embeddings which are close. Then 
$F_*, F_*':C_i^f(X)\to C_i^f(Y)$ are chain homotopic.
\end{proposition}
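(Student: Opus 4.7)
The plan is to construct an explicit chain homotopy $h\colon C_n^f(X)\to C_{n+1}^f(Y)$ via the classical prism formula. Namely, on a simplex I would set
\begin{equation*}
h([x_0,\dots,x_n])=\sum_{i=0}^n (-1)^i [F(x_0),\dots,F(x_i),F'(x_i),\dots,F'(x_n)]
\end{equation*}
and extend linearly. This is the standard algebraic-topology prism operator, translated from affine straight-line homotopies to the discrete setting; here the ``straight line'' between $F(x_i)$ and $F'(x_i)$ is replaced by the zero-length segment provided by closeness of $F$ and $F'$.

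The first main step is to check that $h$ is well defined, i.e.\ that it sends $C_n^f(X)$ into $C_{n+1}^f(Y)$. For propagation, if $c\in C_n^f(X)$ has $\mathscr{P}(c)=R$, then in any simplex appearing with nonzero coefficient in $h(c)$ the distance between two vertices of the form $F(x_j), F'(x_k)$ (or $F(x_j),F(x_k)$, etc.) is bounded by $\rho_+(R)+C$, where $C$ bounds $d_Y(F(x),F'(x))$ and $\rho_+$ is the upper control from the quasi-isometric embedding. Hence $\mathscr{P}(h(c))<\infty$. For the growth condition, the length of each output simplex is bounded by an affine function of $|[x_0,\dots,x_n]|$, again because $F,F'$ are quasi-isometric embeddings and close; applying the axioms \eqref{equation : f has the same growth under shifts} and \eqref{equation : f(Ct) le Df(t)} yields a uniform constant $L$ with $f(\text{output length}) \le L\, f(|[x_0,\dots,x_n]|)$, which is exactly the bound needed to stay in $C_{n+1}^f(Y)$.

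The second main step is the identity $\partial h+h\,\partial = F_*-F'_*$. This is a purely formal telescoping computation on simplices: applying $\partial$ to $h([x_0,\dots,x_n])$ produces terms that either delete one of the doubled vertices $F(x_i),F'(x_i)$ (giving precisely $F_*-F'_*$ after the internal cancellations between consecutive $i$'s) or delete one of the outer vertices, and the latter terms cancel against $h(\partial [x_0,\dots,x_n])$ summand by summand. Since this cancellation is identical to the classical prism identity, it transfers verbatim to the coarse context once $h$ is known to land in the correct chain group. (If one insists on antisymmetry of chains under permutations, one can antisymmetrize both sides; alternatively one can work with ordered simplices throughout and pass to alternating chains at the end.)

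The only real obstacle is the bookkeeping in the first step: keeping track of how the length and propagation of the output simplex depend on those of the input, and verifying that this dependence is absorbed by the axioms on $f$. Everything else, including the chain-homotopy identity itself, is a direct transcription of the classical proof.
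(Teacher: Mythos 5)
Your proof is correct and is essentially the paper's argument: the authors also use the classical prism operator (they merely factor it through the two inclusions $i_0,i_1\colon X\to X\times\{0,1\}$ and the coarse map $\{F,F'\}$, whereas you compose these and write the homotopy directly in $Y$). Your explicit verification that the prism lands in $C_{n+1}^f(Y)$ — propagation bounded via $\rho_+$ and closeness, growth absorbed by $(f_1)$ and $(f_2)$ — is the part the paper leaves implicit, and it is the right check to make.
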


\begin{proof}
The proof follows the one in  \cite{block-weinberger}. 
If $F$ and $F'$ are close then the map $\set{F,F'}:X_1\times \set{0,1}\to Y$
is a coarse map, so we need to show that $i_0,i_1:X\to X\times\set{0,1}$ are chain homotopic.
Let $H:C_i^f(X)\to C_{i+1}^f(X\times\set{0,1})$ be defined as the linear
extension of 
$$H(x_0,\dots,x_i)=\sum_{j=0}^i(-1)^j((x_0,0),\dots,(x_j,0),(x_j,1),\dots,(x_i,1)).$$ Then $\partial H+H\partial=i_{1*}-i_{0*}$.
\end{proof}

\begin{corollary}\label{corollary : QI invariance}
$H_*^f$ is a quasi-isometry invariant, i.e.
for a fixed $f$ and a quasi-isometry $F:X\to Y$ we have $H_n^f(X)\cong H_n^f(Y)$ for every 
$n\in \NN$.
\end{corollary}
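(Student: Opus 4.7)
The plan is to derive the corollary directly from the preceding proposition via a standard ``coarse inverse'' argument. Given a quasi-isometry $F:X\to Y$, I first need to produce a coarse inverse $G:Y\to X$ such that $G\circ F$ is close to $\mathrm{id}_X$ and $F\circ G$ is close to $\mathrm{id}_Y$. Since $F$ is a quasi-isometry, the defining condition gives a constant $C>0$ such that every $y\in Y$ lies within $C$ of some point of $F(X)$; so for each $y\in Y$ one selects (via the axiom of choice) a point $G(y)\in X$ with $d_Y(F(G(y)),y)\le C$. The lower affine bound $\rho_-(d_X(x,y))\le d_Y(F(x),F(y))$ immediately forces $G$ to be a quasi-isometric embedding (with affine control), and by construction $F\circ G$ is close to $\mathrm{id}_Y$. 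A short computation using the lower affine bound applied to $x$ and $G(F(x))$, together with $d_Y(F(G(F(x))),F(x))\le C$, shows that $G\circ F$ is close to $\mathrm{id}_X$ as well.

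Next I would invoke functoriality: since $F$ and $G$ are quasi-isometric embeddings, both $F_*:C_i^f(X)\to C_i^f(Y)$ and $G_*:C_i^f(Y)\to C_i^f(X)$ are chain maps at the specified growth rate $f$ (no loss in the control function because the compositions $\rho_-^{-1}$ are affine and hence preserve the class of functions satisfying \eqref{equation : f(Ct) le Df(t)}). Composing, $G_*\circ F_*=(G\circ F)_*$ is a map $C_i^f(X)\to C_i^f(X)$ and, by the preceding proposition applied to the close pair $G\circ F$ and $\mathrm{id}_X$, it is chain homotopic to the identity. The symmetric argument gives $F_*\circ G_*\simeq \mathrm{id}$ on $C_i^f(Y)$. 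Passing to homology, $F_*$ and $G_*$ are mutually inverse isomorphisms $H_n^f(X)\cong H_n^f(Y)$.

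The only mildly delicate point is the bookkeeping with the growth function $f$: one must check that composing the inclusions $C_i^f(X)\to C_i^{f\circ\rho_-^{-1}}(Y)\to C_i^{f\circ\rho_-^{-1}\circ\rho_-^{-1}}(X)$ lands back in $C_i^f(X)$ (not in a strictly larger space). This works precisely because $\rho_-$ is affine for a quasi-isometry, so $\rho_-^{-1}$ is affine, and condition \eqref{equation : f(Ct) le Df(t)} together with \eqref{equation : f has the same growth under shifts} ensures $f\circ\rho_-^{-1}\le L f$ for some constant $L$. This is the step I would be most careful about; everything else is a formal transcription of the classical argument for homotopy invariance of uniformly finite homology in \cite{block-weinberger}.
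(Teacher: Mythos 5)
Your proposal is correct and follows essentially the same route as the paper: the corollary is deduced from the preceding proposition by constructing a coarse inverse $G$, checking that quasi-isometric embeddings induce maps that stay in $C_i^f$ (using that $\rho_-^{-1}$ is affine together with conditions $(f_1)$ and $(f_2)$), and applying the chain-homotopy statement to the close pairs $G\circ F,\,\mathrm{id}_X$ and $F\circ G,\,\mathrm{id}_Y$. The bookkeeping point you flag is exactly the one the paper's setup is designed to handle, and your treatment of it is right.
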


\subsection{Vanishing of the fundamental class} 
The phenomenon that we want to explore is that the vanishing of the fundamental class 
$$[\Gamma]=\sum_{x\in \Gamma} [x]$$ 
in $H_0^f(\Gamma)$ for $f$ with a certain
growth type has geometric consequences. The most interesting case is when the space in question is a Cayley graph of a finitely generated group. Theorem \ref{theorem : linear preimage via the spread tail construction} shows that in this case the fundamental class vanishes when the growth type is at least linear (see also \cite{zuk}). On the other hand, if $f$ is a constant function
and  $H^f_0(\Gamma)=H^{\text{uf}}_0(\Gamma)$ then vanishing of the fundamental class $[\Gamma]$
in $H^{\text{uf}}_0(\Gamma)$ is equivalent to non-amenability of $\Gamma$
\cite[Theorem 3.1]{block-weinberger}. Our philosophy is that if $H^{\text{uf}}_0(\Gamma)$ is not zero (i.e.\ $\Gamma$ is amenable), then 
the vanishing of the fundamental class in $H^f_0(\Gamma)$ for an unbounded $f$
should quantify ``how amenable'' $\Gamma$ is, in terms of the growth of $f$: the
slower the growth of $f$, the ``less amenable'' the group $\Gamma$ is.

In the uniformly finite homology vanishing of the fundamental class is equivalent to vanishing of the $0$-th homology group. However, this is not expected to happen in general for the 
controlled homology. We record the following useful fact.

\begin{lemma}\label{lemma : bounded below chains vanish then [G] vanishes}
Let $c=\sum_{x\in X}c_x[x]\in C_0^f(X)$ be such that $c_x\ge C>0$ for some $C$. If 
$[c]=0$ in $H_0^f(X)$ then $[X]=0$ in $H_0^f(X)$.
\end{lemma}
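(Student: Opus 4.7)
My plan is to construct an explicit $1$-chain in $C_1^f(X)$ bounding $[X]$ via a reweighting of $\beta$, or to close the argument by a positivity/separation argument. Write $c=\partial\beta$ for some $\beta\in C_1^f(X)$. Scaling, $\partial(\beta/C)=c/C$ is a boundary whose coefficients $c_x/C$ are at least $1$, and the identity
\[
[X] \;=\; \tfrac{1}{C}c \;-\; \tfrac{1}{C}(c-C[X])
\]
reduces the problem to showing that the ``excess'' chain $d:=\sum_x \tfrac{c_x-C}{C}[x]$, which has non-negative coefficients and lies in $C_0^f(X)$ (its coefficients are bounded by $K'f(|x|)$ using $f\ge 1$ and $c\in C_0^f(X)$), is itself a boundary.

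For this, the natural candidate is a vertex-wise reweighting of $\beta$. Since $\sum_y\beta(y,z)=c_z$ for every $z$, the formal non-antisymmetric chain $\beta(x,y)/c_y$ would bound $[X]$ directly; its antisymmetrization is
\[
\widetilde\gamma(x,y)\;:=\;\tfrac{1}{2}\beta(x,y)\bigl(1/c_x+1/c_y\bigr),
\]
which lies in $C_1^f(X)$ with the same propagation as $\beta$, using $(f_1)$--$(f_2)$, bounded geometry, and the hypothesis $c_x\ge C$ to verify the bound $|\widetilde\gamma(x,y)|\le K''f(|x|)$. A direct computation using $\sum_y\beta(y,z)=c_z$ then gives $\partial\widetilde\gamma=\tfrac{1}{2}[X]+\tfrac{1}{2}e$, where $e_z:=\sum_y\beta(y,z)/c_y$ defines a chain $e\in C_0^f(X)$. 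Hence $[X]=2\partial\widetilde\gamma - e$, and showing $[X]$ is a boundary reduces to showing the residual $e$ is a boundary.

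The main obstacle is controlling this residual, which the naive antisymmetrization introduces. The cleanest completion is a contrapositive argument: assuming $[X]\ne 0$ in $H_0^f(X)$, one would produce a positive linear functional $\phi$ on $C_0^f(X)$ that vanishes on $\partial C_1^f(X)$ and satisfies $\phi([X])>0$, via a Hahn--Banach-style separation of the positive cone $\{d\in C_0^f(X):d_x\ge 0\}$ from the subspace of boundaries. Positivity together with the coefficientwise inequality $c\ge C[X]$ then yields $\phi(c)\ge C\phi([X])>0$, contradicting $\phi(c)=0$. Alternatively, a constructive route is to iterate the reweighting on successive corrections $\beta-C\widetilde\gamma,\ldots$, using the uniform lower bound $c_x\ge C$ to obtain geometric decay of the residuals in the $C_0^f$-seminorm so that the telescoping series produces a bounding $1$-chain for $[X]$ in $C_1^f(X)$.
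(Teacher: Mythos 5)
Your reweighting computation is fine as far as it goes, but the argument does not close: the residual $e_z=\sum_y\beta(y,z)/c_y$ is an essentially arbitrary element of $C_0^f(X)$ (its coefficients have no sign and no reason to be boundaries), so you have only traded the original problem for an equivalent one. Neither proposed completion repairs this. The Hahn--Banach separation of the positive cone from $\partial C_1^f(X)$ inside $C_0^f(X)$ is precisely the duality that works for uniformly finite homology (Block--Weinberger) but, as the chains here are unbounded and $\partial$ is not continuous for any natural norm on $C_*^f$, there is no justification for the existence of such a functional --- indeed the paper's whole point in Section 4 is that this argument does not generalize and must be replaced. The iteration also has no mechanism for convergence: the hypothesis $c_x\ge C$ only gives the upper bound $1/c_x\le 1/C$, which produces no contraction factor; the residuals a priori stay the same size in the $C_0^f$-seminorm, so the telescoping series need not converge in $C_1^f(X)$.

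The step you are missing is a pointwise, combinatorial one rather than a functional-analytic one. First reduce to integer coefficients: with $N=\sup_x\#B(x,\mathscr{P}(\beta))$, choose $\kappa$ with $\kappa C-N\ge 1$ and round $\kappa\beta$ away from zero on each edge; by bounded geometry the rounding changes each $\partial(\kappa\beta)(x)\ge\kappa C$ by at most $N$, so the resulting $\psi\in C_1^f(X;\ZZ)$ satisfies $\partial\psi(x)\ge 1$ for every $x$. Then run the Block--Weinberger tail (Ponzi scheme) construction: since $\partial\psi(x)\ge1$ there is an edge leaving $x$ with coefficient $\ge1$, and one can follow such edges forever to extract an infinite path $t_x$ with $\partial t_x=[x]$; subtract it and iterate over all $x$. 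Because every tail only uses edges on which $\psi$ has a positive integer coefficient, the total chain $\sum_x t_x$ is dominated edgewise by $\psi$, hence lies in $C_1^f(X)$ and bounds $[X]$. It is the integrality that makes the greedy extraction terminate-free and keeps the sum of tails controlled --- this is the content your proposal lacks.
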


\begin{proof}
First we will show that under the assumption there exists $c'=\sum c'_x[x]$ such that $c_x\in \NN\setminus \set{0}$ and such that $c=\partial \psi$ where $\psi\in C_1^f(X;\ZZ)$.
Denote $N=\sup_{x\in X}\# B(x,\mathscr{P}(\phi))$. Given $c$ and $\phi$ such that 
$\partial \phi=c$ take $\kappa>0$ sufficiently large to guarantee $\kappa C- N\ge 1$. We have 
$\partial (\kappa\phi)=\kappa c$. 
Now define
$$\psi(x,y)=\left\{
\begin{array}{ll}
\lceil \kappa \phi(x,y)\rceil&\text{ if } \phi(x,y)\ge 0 \\
\lfloor \kappa\phi(x,y) \rfloor&\text{ if } \phi(x,y)< 0
\end{array}\right.$$
where $\lceil\cdot\rceil$ and $\lfloor\cdot\rfloor$ 
are the "ceiling" and "floor" functions respectively.
Then $\psi\in C_1^f(X;\ZZ)$ and 
\begin{eqnarray*} 
\partial \psi(x) & \ge&\sum_{y\in X}\kappa\phi(y,x)-1\\
&\ge&\partial \kappa\phi(x)- N\\
&\ge&1.
\end{eqnarray*}
Now using the technique
from \cite[Lemma 2.4]{block-weinberger}, for every $x\in X$ we can
construct a ``tail'' $t_x\in C_1^f(X)$ such that $\partial t_x=[x]$. To build $t_x$ 
note that since $\partial\psi(x)\ge 1$ then there must exist $x_1\in X$ such that the
coefficient of $\psi(x,x_1)\ge 1$. Then, since $\psi(x_1,x)\le -1$ and $\partial \psi(x_1)\ge 0$
there must exist $x_2$ such that $\psi(x_2,x_1)\ge 1$, and so on. 
Then $t_x=[x,x_1]+[x_1,x_2]+\dots$. We apply the same 
procedure to $\psi-t_x$ and continue inductively. Since we are only choosing simplices that  
appear in $\psi$ we have
$\sum_{x\in X}t_x\in C^f_1(X)$. By construction $\partial (\sum_x t_x)=1_X$.  
  \end{proof}

\section{An explicit linear 1-chain}\label{section : an explicit 1-chain}

In this section we prove the first of the main results of this paper and explain 
why we are mainly interested in $f$ at most
linear.
One picture conveyed in \cite{block-weinberger} for vanishing of the fundamental class in 
$H_0^{\rm uf}(\Gamma)$ was the one of an \emph{infinite Ponzi scheme} or \emph{tails} 
(as $t_x$ constructed above). More precisely, the vanishing is equivalent to the existence 
of a collection of tails $t_x\in C_1^{\rm uf}(\Gamma)$ of the form
$[x,x_1]+[x_1,x_2]+[x_2,x_3]+\dots$ for each point $x\in\Gamma$, so that $\partial t_x=[x]$, 
and such that for any chosen radius, the number of tails passing through any ball of that 
radius is uniformly bounded. For amenable groups this is impossible to arrange. However, 
for any finitely generated group, we can always arrange \emph{some} kind of scheme, where 
the number of tails passing through a ball will be controlled by an unbounded function. Such 
escape routes to infinity are also known as \emph{Eilenberg swindles}.

The following  example explains why a linear control might be sufficient in general. 
On $\ZZ$, the
1-chain $\sum_{n\in \ZZ}n[n,n+1]$ has linear growth and its boundary is the
fundamental class $[\ZZ]=\sum_{x\in \ZZ}[x]$.  If $\Gamma$ has a
quasi--isometrically embedded cyclic subgroup then we can, roughly speaking,
just take the above 1-chain on every coset to bound the fundamental class of $[\Gamma]$.
In general however, as various solutions to the Burnside problem show, an infinite group
does not have to have infinite cyclic subgroups at all. Nevertheless, an infinite finitely
generated group  still has a lot of infinite geodesics,  which can play
the role of the cyclic subgroup.

\begin{theorem}\label{theorem : linear preimage via the spread tail construction} 
Let $\Gamma$ be a finitely generated infinite group. 
Then $[\Gamma]=0$ in
$H^{\mathrm{lin}}_0(\Gamma)$.
\end{theorem}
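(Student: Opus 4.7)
The plan is to construct directly a $1$-chain $\psi\in C_1^\lin(\Gamma)$ with $\partial\psi=[\Gamma]$, generalizing the model computation $\partial(\sum_{n\in\ZZ}n[n,n+1])=-[\ZZ]$ recalled just before the theorem. Since $\Gamma$ need not contain a cyclic subgroup at all (e.g.\ in an infinite Burnside group), I would replace $\ZZ$ by infinite geodesic rays in the Cayley graph. Their existence is guaranteed by K\"onig's lemma applied to the Cayley graph: since $\Gamma$ is infinite and has bounded geometry, there is an infinite geodesic ray $\gamma=(e=\gamma_0,\gamma_1,\gamma_2,\dots)$ starting at $e$, and similarly one can start a ray at any given vertex. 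This is the only place the infiniteness hypothesis enters.

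Fix such a ray $\gamma$. For each $x\in\Gamma$ I would build a \emph{spread tail} $t_x$: a $1$-chain of propagation $1$ whose formal boundary is $-[x]$, obtained by concatenating a short path $\alpha_x$ from $x$ to a chosen attachment point $\gamma_{m(x)}$ on the ray with the infinite tail $\sum_{k\ge 0}[\gamma_{m(x)+k},\gamma_{m(x)+k+1}]$ which pushes one unit of mass along $\gamma$ to infinity (so that after telescoping $\partial t_x=[\gamma_{m(x)}]-[x]-[\gamma_{m(x)}]=-[x]$). The attachment map $m:\Gamma\to\NN$ is the essential design choice: it must satisfy $|\{x:m(x)\le k\}|\le Ck$ for some constant $C$ and all $k$, so that the accumulated flow along $\gamma$ is linear in the position. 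Since $\Gamma$ is countable this is easy to arrange by enumerating its vertices in some order and feeding them into $\NN$ at a controlled rate; this is the ``spreading'' referred to in the name.

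Setting $\psi=-\sum_{x\in\Gamma}t_x$, I would then verify three things: (a) $\psi$ has bounded propagation (each $t_x$ has propagation $1$); (b) the coefficient of $\psi$ on every edge $[a,b]$ is bounded by $K\max(|a|,|b|)$; (c) $\partial\psi=[\Gamma]$ coefficient-wise, which is legitimate because bounded propagation forces only finitely many tails to contribute to the boundary at any given vertex. For a ray edge $[\gamma_k,\gamma_{k+1}]$ the coefficient is exactly $|\{x:m(x)\le k\}|\le Ck$, linear in $|\gamma_{k+1}|$, so (b) is automatic there. The main obstacle I anticipate is (b) on the ``connecting'' edges lying in the short paths $\alpha_x$: these paths can overlap in complicated ways, and I must route them coherently so that no edge is traversed by more than linearly many of them. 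My strategy would be to tie the $\alpha_x$'s to a fixed BFS spanning tree of $\Gamma$, having them merge onto $\gamma$ through a controlled set of funnels, and then combine bounded geometry of $\Gamma$ with the linear spreading of $m$ to translate ``number of contributing $x$'s'' into the required linear bound on the edge coefficient. This routing-and-counting step is the combinatorial heart of the construction.
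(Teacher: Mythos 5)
There is a genuine gap, and it sits precisely in the step you defer to the end: the ``routing-and-counting step'' is not a finishing touch but the entire content of the theorem, and the ingredients you propose for it (K\"onig's lemma, a BFS tree, bounded geometry, and a linearly controlled attachment map $m$) cannot suffice. The clearest way to see this is that your argument, as described, never uses the group structure --- you say explicitly that infiniteness enters only through K\"onig's lemma, so the construction would apply verbatim to any infinite, bounded geometry, quasi-geodesic space. But the paper's ``lumberjack'' space $X=\NN\times\{0\}\cup\bigcup_{n}\{n\}\times\{0,\dots,a_n\}$ is exactly such a space (it even contains your geodesic ray, the horizontal line) in which linear control provably fails once $a_n$ grows superlinearly: all $a_n$ units of mass on the $n$-th tooth must exit through the single edge from $(n,1)$ to $(n,0)$, which sits at distance about $n$ from the origin, so that edge carries coefficient at least $a_n\gg n$ no matter how you choose $m$ or the routing. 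Hence homogeneity of $\Gamma$ must enter essentially, and your sketch gives no indication of where. Quantitatively, the same tension appears inside any group of exponential growth: the constraint $\#\{x: m(x)\le k\}\le Ck$ forces a point $x$ with $|x|=R$ to be attached at $m(x)$ of order $\#B(e,R)$, so $\alpha_x$ has exponential length and exponentially many connectors are simultaneously in transit across each sphere of radius $k$; a crude count shows only that the \emph{average} congestion per edge can be kept linear, and upgrading that to a uniform bound is exactly as hard as the theorem itself. (A smaller point: the connectors must also be kept off the ray $\gamma$ until they reach their attachment points, or else the ray edges pick up all the in-transit traffic and your claim that (b) is ``automatic'' there fails.)

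The paper avoids the single-ray bottleneck altogether. Instead of one ray it takes a bi-infinite geodesic $g_0$ through $e$ and the left-invariant family $\G=\{\gamma g_0 : \gamma\in\Gamma\}$ of all its translates, and uses \.Zuk's construction of a left-invariant finitely additive measure $\varphi$ on (suitable subsets of) $\G$, normalized so that the geodesics through any fixed point have total measure $1$. The tail $t_\delta$ is the $\varphi$-average, over all geodesics $h$ through $\delta$, of the subray of $h$ starting at $\delta$ and pointing away from the point of $h$ nearest to $e$; invariance of $\varphi$ gives $\partial t_\delta=[\delta]$, and the linear bound on $\psi=\sum_\delta t_\delta$ follows because a given edge at $\gamma$ can lie on the chosen subray $\vec s_\delta(h)$ only for the at most $2\vert\gamma\vert$ points $\delta$ of $h$ lying between $\gamma$ and the point of $h$ nearest to $e$. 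This is where left-invariance does the work that your routing scheme would otherwise have to do by hand; if you want to salvage your approach, that is the mechanism you need to import.
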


\begin{proof}
  Take a group $\Gamma=\langle S\rangle$ with a finite symmetric
  generating set $S$.  The notation we shall use is compatible with
  the one in \cite{zuk}, modulo the fact that we use the
  left-invariant metric induced from $S$, while in that paper the
  right-invariant metric is used.  Let $g_0$ be a bi-infinite geodesic
  through the identity $e\in \Gamma$, let $\G=\{\gamma g_0\mid \gamma\in \Gamma\}$
  be a left-translation invariant set of parametrized geodesics (i.e.\
  geodesics with a distinguished point). Let $G\subset \G$ be the set
  of all geodesics from $\G$ passing through $e$. We say that a subset
  $H$ of $\G$ is measurable, if it is a subset of a set of the form
  $\delta_1G\cup\dots\cup \delta_kG$ for some
  $\delta_1,\dots,\delta_k\in\Gamma$. Denote by $\F$ the set of
  measurable subsets of $\G$.  \.Zuk's construction \cite[Section 3.2]{zuk}, applied to a group 
  with a left-invariant metric, produces a finitely additive
  measure $\varphi$ on $\F$, which is left-invariant, and for which
  $\varphi(G)=1$.

  Let us remark that in general \.Zuk makes use of the invariant mean
  on $\ZZ$. However, if one can choose $g_0$ in such a way that
  $N=\#\{\gamma g_0\mid \gamma\in g_0\}<\infty$ (now we consider the
  geodesics $\gamma g_0$ as unparametrized), then the construction of
  such a measure greatly simplifies; see \cite[Question
  1]{zuk}. Indeed, then one can use just unparametrized geodesics and
  set $\varphi(\{\gamma g_0\})=\frac1N$. 

Note that given any path $s\subset \Gamma$, we can think of it as a $1$-chain
in $C^{\text{uf}}_1(\Gamma)$ with propagation $1$: $s$ is just a sequence of
points (finite, infinite or bi-infinite), e.g.\
$(\dots,\gamma_{-1},\gamma_0,\gamma_1,\dots)$, and we view $s$ as the chain
$\sum_{n\in\ZZ}[\gamma_n,\gamma_{n-1}]\in C^{\text{uf}}_1(\Gamma)$.

On each $h\in\G$, let us distinguish another point $p(h)\in h$, which
realizes the distance from $e$ to the set $h$ (if there are more such points,
just choose one arbitrarily). Furthermore, for each parametrized geodesic
$h\in\G$ containing $\delta$, choose the subray $\vec s_\delta(h)$ of $h$
which begins at $\delta$ and does not contain $p(h)$ (if $\delta=p(h)$,
choose one of the rays arbitrarily). If $h$ does not contain $\delta$, just
put $\vec s_\delta(h)=0$.

For each $\delta\in \Gamma$, we would like to define a ``spread tail''
$t_\delta\in C^{\text{uf}}_1(\Gamma;\RR)$ as
$$t_\delta=\sum_{h\in\G}\varphi(\{h\})\vec s_\delta(h),$$
but if $N$ is not finite, then $\varphi(\{h\})=0$. Instead, we define $t_\delta$ as follows: for each edge 
$(\gamma,\gamma s)$ ($s\in S$) in the Cayley graph of $\Gamma$,
define the set of geodesics
$$A_\delta(\gamma,\gamma s)=\{h\in\G\mid \delta\in h\text{ and }
 (\gamma,\gamma s)\subset\vec s_\delta(h)\text{ preserving the direction}\}.$$
Note that $A_\delta(\gamma,\gamma s)\subset \delta G$ is measurable, with
measure $\leq 1$. Now define 
$$
t_\delta=\sum_{\gamma\in\Gamma, s\in S}
  \varphi(A_\delta(\gamma,\gamma s))[\gamma s,\gamma]
$$
Obviously, the cycle $t_\delta$ has coefficients uniformly bounded by
$1$.\\\\

\begin{figure}[htp]
\centering
     \includegraphics{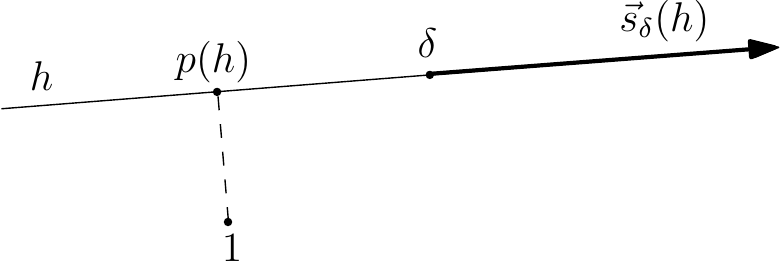}
\caption{A tail}\label{figure : tail}
\end{figure}

\begin{lemma}  $\partial t_\delta=[\delta]$
\end{lemma}
\begin{proof} Consider any point $\gamma\in \Gamma\setminus\{\delta\}$, and
note that there are only finitely many edges $(\gamma,\gamma s)$,
$s\in S$ going out of the vertex $\gamma$. The coefficient of
$[\gamma]$ in $\partial t_\delta$ is
$$
\sum_{s\in S}\varphi(A_\delta(\gamma,\gamma s))-
 \sum_{s\in S}\varphi(A_\delta(\gamma s,\gamma))=
 \varphi\Bigl(\bigcup_{s\in S}A_\delta(\gamma,\gamma s)\Bigr)-
 \varphi\Bigl(\bigcup_{s\in S}A_\delta(\gamma s,\gamma)\Bigr)
$$
Any ray $\vec s_\delta(h)$ going through $\gamma$ contributes $h$ to each of
the sets $\bigcup_{s\in S}A_\delta(\gamma,\gamma s)$ and
$\bigcup_{s\in S}A_\delta(\gamma s,\gamma)$. On the other hand, if the ray
$\vec s_\delta(h)$ does not pass through $\gamma$, $h$ is not contained in
any of these sets. Hence, the expression in the above display is
$0$. Finally, looking at the coefficient of $[\delta]$ in $\partial
t_\delta$, observe that the rays $s_\delta(h)$ only go away from $\delta$ and
all the geodesics $h\in\G$ passing through $\delta$ are used. Consequently,
the coefficient is
$\varphi(\bigcup_{s\in S}A_\delta(\delta,\delta s))=\varphi(\delta G)=1$.
\end{proof}
\smallskip

Now let $\psi=\sum_{\delta\in \Gamma}t_\delta$. Clearly $\psi$ has
propagation $1$ and $\partial \psi = \sum_{\delta\in \Gamma}[\delta]$. The
proof of the theorem is finished by the following Lemma.
\end{proof}

\begin{lemma}$\psi\in C_1^{\lin}(\Gamma;\RR)$, i.e.\ $\psi$ has linear
growth.\end{lemma}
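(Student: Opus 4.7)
The plan is, for each edge $(\gamma,\gamma s)$ of the Cayley graph, to bound the coefficient $\psi(\gamma,\gamma s)=\sum_{\delta\in\Gamma}\varphi(A_\delta(\gamma,\gamma s))$ (up to sign from antisymmetry) by a linear function of $|\gamma|$; propagation $1$ is inherited from the individual $t_\delta$, so this is what remains. The strategy is a counting argument: each geodesic $h\in\G$ is used in at most linearly many $A_\delta$, while the total ``mass'' of relevant geodesics has $\varphi$-measure at most $1$.

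The geometric heart of the proof is the following observation. If $h\in A_\delta(\gamma,\gamma s)$, then $\delta$ lies on $h$ on the segment from $p(h)$ to $\gamma$, because the ray $\vec s_\delta(h)$ starts at $\delta$, proceeds away from $p(h)$, and must pass through $\gamma$ and then $\gamma s$. Since $p(h)$ is the closest point of $h$ to $e$ and $\gamma\in h$, we have $d(e,p(h))\le|\gamma|$, whence $d(p(h),\gamma)\le 2|\gamma|$ by the triangle inequality. Therefore, for each fixed $h$,
$$\#\,\bigl\{\delta\in\Gamma : h\in A_\delta(\gamma,\gamma s)\bigr\} \;\le\; 2|\gamma|+1.$$
As a bonus, $A_\delta(\gamma,\gamma s)$ is empty unless $d(\delta,\gamma)\le 2|\gamma|$, so by bounded geometry only finitely many $\delta$ contribute to the sum defining $\psi(\gamma,\gamma s)$.

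The remaining ingredient is the finitely additive analogue of the standard multiplicity inequality: whenever $A_1,\dots,A_m\in\F$ satisfy $\sum_i\Id_{A_i}\le k$ pointwise, then $\sum_i\varphi(A_i)\le k\,\varphi(\bigcup_i A_i)$. I would prove this by decomposing $\bigcup_i A_i$ into the disjoint pieces $E_T=\{h : \{i : h\in A_i\}=T\}$ indexed by nonempty $T\subset\{1,\dots,m\}$ with $|T|\le k$, checking that each $E_T\in\F$, and then using finite additivity to write $\varphi(A_i)=\sum_{T\ni i}\varphi(E_T)$, so that $\sum_i\varphi(A_i)=\sum_T|T|\varphi(E_T)\le k\,\varphi(\bigcup_i A_i)$. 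Since every $h\in A_\delta(\gamma,\gamma s)$ passes through $\gamma$, the union $\bigcup_\delta A_\delta(\gamma,\gamma s)$ is contained in $\gamma G$, so applying the inequality with $k=2|\gamma|+1$ and using left-invariance $\varphi(\gamma G)=\varphi(G)=1$ yields
$$|\psi(\gamma,\gamma s)| \;\le\; \sum_{\delta\in\Gamma}\varphi(A_\delta(\gamma,\gamma s)) \;\le\; (2|\gamma|+1)\,\varphi(\gamma G) \;=\; 2|\gamma|+1.$$

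The main point to verify carefully is that the decomposition into the sets $E_T$ stays inside $\F$; this uses closure of $\F$ under finite intersections and under taking complements relative to a fixed measurable superset, both of which follow directly from the definition of $\F$ as subsets of finite unions of left-translates of $G$. Everything else is bookkeeping, and the linear growth of $\psi$ falls out of the resulting estimate.
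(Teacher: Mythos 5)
Your proof is correct, and its geometric heart is exactly the paper's: a geodesic $h$ belongs to $A_\delta(\gamma,\gamma s)$ only for $\delta$ lying on the segment of $h$ between $p(h)$ and $\gamma$, which has at most $2|\gamma|+1$ points, while the total mass of geodesics through $\gamma$ is $\varphi(\gamma G)=1$. Where you diverge is in how this multiplicity bound is converted into a bound on $\sum_\delta\varphi(A_\delta(\gamma,\gamma s))$. The paper partitions each $A_\delta(\gamma,\gamma s)$ according to the initial segment $a=[p(h),\gamma]$ of the ray, observes that the resulting piece $B(\delta,a)$ is either empty or equal to a set $B(a)$ independent of $\delta$, and then swaps the order of summation, using disjointness of the $B(a)$ over the finite index set $P(\gamma,\gamma s)$. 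You instead prove a general multiplicity inequality for finitely additive measures ($\sum_i\Id_{A_i}\le k$ pointwise implies $\sum_i\varphi(A_i)\le k\,\varphi(\bigcup_i A_i)$) via the partition into the sets $E_T$, and apply it with $k=2|\gamma|+1$. Both are valid double-counting arguments; yours isolates a reusable lemma whose hypotheses are easy to check here (note that $\F$ is by definition hereditary --- every subset of a finite union of translates of $G$ is measurable --- so membership of the $E_T$ in $\F$ is immediate, even easier than you suggest), while the paper's ad hoc partition avoids stating any auxiliary lemma at the cost of slightly more notation. Your remark that only finitely many $\delta$ contribute (those with $d(\delta,\gamma)\le 2|\gamma|$, by bounded geometry) is the same finiteness observation the paper makes at the end, and is needed in both versions to reduce to finite sums.
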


\begin{proof}The idea behind the argument is the following: take
$\delta\in\Gamma$ and a geodesic $h\in\G$, which passes through $\delta$. We
need to count those points $\beta$ on $h$, for which the chosen ray $\vec
s_\beta(h)$ passes through $\delta$. It is easy to see that these are precisely
the points on $h$ between $p(h)$ and $\delta$. By the triangle inequality and
the choice of $p(h)$, we have that the number $K(\delta,\gamma)$ of those
points is at most $\vert p(\gamma)\vert+\vert \delta\vert \leq 2\vert\delta\vert$, where by our convention $\vert \gamma\vert$ denotes the length of the element $\gamma$. It
follows that the coefficient of edges ``attached" to $\delta$ is at most
$$
\sum_{h\in \delta G}\varphi(\{h\})K(\delta,\gamma)
\leq \sum_{h\in \delta G}2\varphi(\{h\})\vert\delta\vert=2\varphi(\delta G)\vert \delta\vert=
2\vert\delta\vert.
$$
Of course, the problem is again that all $\varphi(\{h\})$ can vanish and so we
need to adjust the argument.

Pick any edge $(\gamma, \gamma s)$ in the Cayley graph of $\Gamma$ and denote
by $c_{[\gamma,\gamma s]}$ the coefficient of $[\gamma,\gamma s]$ in
$\psi$. From the definition, we have $c_{[\gamma,\gamma
  s]}=\sum_{\delta\in\Gamma}\varphi(A_\delta(\gamma,\gamma s))$. The task is to
prove that $c_{[\gamma,\gamma s]}$ depends linearly on $\vert\gamma\vert$.

We are going to further split each $A_\delta(\gamma,\gamma s)$. Let
$P(\gamma,\gamma s)$ be the collection of all finite (non-parametrized)
geodesic paths in $\Gamma$, which are subpaths $[p(h),\gamma]$ of some ray
$\vec s_\delta(h)$ containing $(\gamma,\gamma s)$, $\delta\in \Gamma$,
$h\in\G$. The collection $P(\gamma,\gamma s)$ is finite. Indeed, for any $\vec
s_\delta(h)$ containing $(\gamma,\gamma s)$ we have that $\vert p(h)\vert\leq
\vert \gamma\vert $ and the assertion follows from bounded geometry.

For each $\delta\in\Gamma$ and $a\in P(\gamma,\gamma s)$, we
denote 
\begin{align*}
B(a)&=\{h\in \G\mid \vec s_\delta(h)\text{ begins with }a\},\\
B(\delta,a)&=\{h\in A_\delta(\gamma,\gamma s)\mid \vec s_\delta(h)\text{
  begins with }a\}.
\end{align*}
All these sets are measurable, since they are subsets of
$\gamma G$. Furthermore $A_\delta(\gamma,\gamma s)=\bigsqcup_{a\in
  P(\gamma,\gamma s)}B(\delta,a)$. Note that $B(\delta,a)$ is either empty
(when $\delta\not\in a$), or equal to $B(a)$ (if $\delta\in a$). For a given
$a\in P(\gamma,\gamma s)$, the number of $\delta$ for which $\delta\in a$
is bounded by $\text{length}(a)=d(\gamma,p(h))\leq
\vert \gamma\vert+\vert p(h)\vert \leq 2\vert \gamma\vert$ (where $h$ is arbitrary element of
$B(a)$). Putting this information together, we obtain an estimate
\begin{eqnarray*}
c_{[\gamma,\gamma s]}&=&\sum_{\delta\in\Gamma}\varphi(A_\delta(\gamma,\gamma
s))\\\\
&=&\sum_{\delta\in\Gamma}\sum_{a\in P(\gamma,\gamma
s)}\varphi(B(\delta,a))\\\\
&=&\sum_{a\in P(\gamma,\gamma s)}\sum_{\delta\in
a}\varphi(B(a))\\\\
&\leq&2\vert \gamma\vert \cdot\left(\sum_{a\in P(\gamma,\gamma s)}\varphi(B(a))\right)\\\\
&\leq&2\vert \gamma\vert \varphi(\gamma G)\\\\
&=&2\vert \gamma\vert.
\end{eqnarray*}
For completeness, note that the sum over $\delta\in \Gamma$ in the above
display has in fact finitely many non-zero terms. The last inequality uses the
fact that $B(a)$'s are disjoint for different $a$'s.
\end{proof}

Motivated by his results on a geometric version of the von Neumann conjecture
 \cite{whyte} Whyte asked whether a geometric version of the Burnside problem 
problem has a positive solution. The geometric Burnside problem, as formulated in 
\cite{whyte}, asks whether every infinite group admits a free translation action of $\ZZ$, where
a translation is understood as a bijective map which is close to the identity. 
When a group has a cyclic subgroup then clearly the left translation action by this subgroup 
is such an action. 

Note that if the answer would be affirmative and, additionally, the orbits would be 
undistorted in the group $G$, 
then such undistorted geometric Burnside problem implies 
Theorem \ref{theorem : linear preimage via the spread tail construction}
by simply copying the 1-chain $\sum n[n,n+1]$ onto every orbit. 
This means that Theorem \ref{theorem : linear preimage via the spread tail construction}
in fact gives an affirmative answer to a weak, homological version of the Burnside problem.
In other words, from the point of view of coarse homology, every infinite finitely generated 
group behaves as if it had an undistorted infinite cyclic subgroup.
To the best of our knowledge this is the strongest existing positive result in this direction.

\begin{remark}\normalfont
The above proof depends on the fact that a group is a very symmetric object, and thus there are generally many directions to escape to infinity.
This is certainly not the case for more general bounded geometry 
metric spaces or manifolds, as the following example shows. Consider the set
$X=\NN\times\{0\}\,\cup\,\left(\bigcup_{i\in\NN}\{n\}\times\{0,1,\dots,a_{n}\}\right)\subseteq 
\ZZ\times \ZZ$,
where $a_n$ is a sequence of natural numbers which increases to infinity.
The metric on $X$ (the ``lumberjack metric'') is defined by
$$d\big(\,(x,y),(x',y')\,\big)=\vert y\vert+\vert x-x'\vert+\vert y'\vert.$$
In this metric space there is only one way to infinity, that is via the horizontal 
line, hence the ``tails'' that constitute a cycle which kills the fundamental class have to escape through this one route. By controlling the growth of the sequence $a_n$ one can easily
impose any lower bound on the growth of any $1$-chain which kills the fundamental class $[X]$.
\end{remark}

\section{$H^f_0(\Gamma)$ and isoperimetric inequalities}\label{section : homology and inequalities}

There is number of isoperimetric inequalities which one can study on a 
finitely generated group on a bounded geometry space $X$. The one we are interested in, studied
 in \cite{erschler,zuk}, is the isoperimetric inequality of the form 
\begin{equation}\tag{$I^{\,f}_{\partial}$}
\label{equation : Zuk-type isoperimetric inequality}
\#A\le C \sum_{x\in \partial A} f(\vert x\vert)
\end{equation}
for all finite sets $A\subset X$, where $\partial A$ is the boundary of $A$ (see below). We will say that
\emph{$X$ satisfies inequality (\ref{equation : Zuk-type isoperimetric inequality})} if there 
exists a constant $C>0$ such that 
(\ref{equation : Zuk-type isoperimetric inequality}) holds for every finite set $A\subseteq X$.  
When $f$ is constant this isoperimetric
inequality is equivalent to non-amenability of the group.

We set
\begin{align*}
\partial A &= \setd{g\in \Gamma}{d(g,A)=1\text{ or }d(g,\Gamma\setminus A)=1},\\
\partial^{\rm e} A &= \setd{(g,h)\in A\times (\Gamma\setminus A)}{d(g,h)=1} \cup
\setd{(g,h)\in (\Gamma\setminus A)\times A}{d(g,h)=1}.
\end{align*}
The first lemma reformulates the inequality \eqref{equation : Zuk-type
isoperimetric inequality} in terms of functions. This form will be much
more convenient to work with in connection to coarse homology. Recall 
that for an $n$-simplex $\overline{x}$ we denote by $\vert \overline{x}\vert$ the 
distance of $\overline{x}$ from the fixed point $\overline{e}$. In particular, if the simplex is 
an edge $(x,y)$, we use the notation $\vert(x,y)\vert$.

\begin{lemma}\label{lemma : equivalent forms of isoperimetric inequality}
Let $X$ be a metric space of bounded geometry. The following two conditions are equivalent
\begin{enumerate}
\renewcommand{\labelenumi}{(\alph{enumi})}
\item $X$ satisfies inequality (\ref{equation : Zuk-type isoperimetric inequality}),\\
\item the inequality 
\begin{equation}\tag{$I^{\,f}_{\nabla}$}
\label{equation : functional version of Zuk's ineq}
\sum_{x\in X}\vert\eta(x)\vert\ \le\ D\left(\sum_{x\in X}\sum_{y\in B(x,1)} \left\vert\eta(x)-\eta(y)\right\vert f(\vert (x,y)\vert)\right)
\end{equation}
holds for every finitely supported function
$\eta:X\to \RR$ and a constant $D>0$.
\end{enumerate}
\end{lemma}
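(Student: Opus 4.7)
The plan is to prove the two implications by the standard ``layer-cake'' / co-area argument used to move between isoperimetric inequalities for sets and Sobolev-type inequalities for functions, combined with some bookkeeping using bounded geometry and property (\ref{equation : f has the same growth under shifts}).

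For the easy direction (b) $\Rightarrow$ (a), I would just take $\eta=\mathbbm{1}_A$ for a finite set $A\subseteq X$. Then $\sum_x|\eta(x)|=\#A$, and $|\eta(x)-\eta(y)|=1$ precisely when exactly one of $x,y$ lies in $A$, i.e.\ when $(x,y)\in \partial^{\rm e}A$. The right-hand side of (\ref{equation : functional version of Zuk's ineq}) thus becomes $D\sum_{(x,y)\in\partial^{\rm e}A}f(|(x,y)|)$. To convert this into the form of (\ref{equation : Zuk-type isoperimetric inequality}), note that for any edge $(x,y)\in\partial^{\rm e}A$ both endpoints lie in $\partial A$, and $|(x,y)|\le |x|+1$, so by property (\ref{equation : f has the same growth under shifts}) we have $f(|(x,y)|)\le Lf(|x|)$. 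Summing over edges and using bounded geometry ($x$ has at most $N_1$ neighbors) turns the sum over $\partial^{\rm e}A$ into a sum over $\partial A$ with an extra multiplicative constant $LN_1$.

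For the harder direction (a) $\Rightarrow$ (b), I would split $\eta=\eta^+-\eta^-$ into its positive and negative parts and handle each separately. For $\eta^+$, the layer-cake formula
$$\sum_{x\in X}\eta^+(x)=\int_0^\infty \#A_t\, dt,\qquad A_t=\{x\in X:\eta^+(x)>t\},$$
together with (\ref{equation : Zuk-type isoperimetric inequality}) applied at each level $t$ gives
$$\sum_x \eta^+(x)\le C\int_0^\infty\sum_{x\in\partial A_t}f(|x|)\,dt.$$
I would then pass to an edge version by noting that each $x\in\partial A_t$ has a neighbor $y$ with $(x,y)\in\partial^{\rm e}A_t$, and for a fixed edge $(x,y)$ the set of levels $t$ with $(x,y)\in\partial^{\rm e}A_t$ has Lebesgue measure $|\eta^+(x)-\eta^+(y)|$. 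Swapping the order of integration yields
$$\sum_x\eta^+(x)\le C'\sum_{x,y:\,d(x,y)\le 1}|\eta^+(x)-\eta^+(y)|\,f(|(x,y)|),$$
for a constant $C'$ absorbing bounded-geometry and $f$-growth factors. Running the same argument on $\eta^-$ and adding gives (\ref{equation : functional version of Zuk's ineq}), using the pointwise identity
$$|\eta^+(x)-\eta^+(y)|+|\eta^-(x)-\eta^-(y)|=|\eta(x)-\eta(y)|,$$
which is immediate by case analysis on the signs of $\eta(x)$ and $\eta(y)$.

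The main obstacle is the second implication, and specifically the careful passage from the vertex boundary $\partial A_t$ appearing in (\ref{equation : Zuk-type isoperimetric inequality}) to the edge boundary $\partial^{\rm e}A_t$ needed to apply the Fubini/layer-cake identity, while simultaneously keeping track of the weight $f(|x|)$ versus $f(|(x,y)|)$. Both adjustments are uniform because of the bounded geometry assumption and (\ref{equation : f has the same growth under shifts}), so this is not conceptually hard, but it is where all the constants come from.
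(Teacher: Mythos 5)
Your proof is correct and follows essentially the same route as the paper's: the easy direction by testing on indicator functions and passing between edge and vertex boundaries via bounded geometry and ($f_1$), and the converse by the standard co-area/layer-cake decomposition into superlevel sets followed by Fubini. The only cosmetic differences are that the paper discretizes the levels (reducing by density to functions with values in $\{i/M\}$ and summing over $i$) instead of integrating over $t$, and reduces to nonnegative $\eta$ via $|\eta|$ and the triangle inequality rather than splitting $\eta=\eta^+-\eta^-$.
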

\begin{proof}
(b) implies (a) follows just by applying the inequality to
$\eta=1_A$, and using the property \eqref{equation : f has the same
growth under shifts}. To prove the other direction we use a standard
``co-area'' argument. It is enough to 
restrict to the case $\sum_{x\in X}\vert \eta(x)\vert=1$ and we first consider $\eta\ge 0$.
By density arguments it is sufficient prove the
claim for functions of $\ell_1$-norm 1 which take values in sets of the form $\set{\frac{i}{M}}_{i\in\NN}$
(where $M$ is chosen for each function separately).
For such $\eta$ we denote $A^{(i)}=\setd{x\in X}{\eta(x)> \frac{i}{M}}$.
Then we can write $\eta(x)=\frac1M\sum_{i\in\NN}1_{A^{(i)}}(x)$ and $\sum_{i\in \NN} \# A^{(i)}=M$. Furthermore,
\begin{eqnarray*}
\sum_{d(x,y)\le 1} \left\vert \eta(x)-\eta(y)\right\vert f(\vert (x,y)\vert)&=&
\frac{1}{M} \sum_{i\in \NN}\left(\sum_{x\in X}\sum_{y\in B(x,1)}\left\vert 1_{A^{(i)}}(x)-1_{A^{(i)}}(y)\right\vert f(\vert (x,y)\vert)\right)\\\\
&\ge&\sum_{i\in \NN} \frac{1}{M}\left( \sum_{(x,y)\in \partial^e A^{(i)}} f(\vert (x,y)\vert)
\right)\\\\
&\ge&C \sum_{i\in \NN} \frac{1}{M}\left( \sum_{x\in \partial A^{(i)}} f(\vert x\vert)
\right)\\\\
&\ge&\sum_{i\in\NN} \frac{1}{M}\, \#A^{(i)}\\\\
&=&1.
\end{eqnarray*}
For a general $\eta$ apply the above inequality to $\vert \eta\vert$ together with the triangle
inequality to obtain

$$\sum_{x\in X}\vert\eta(x)\vert\le\sum_{d(x,y)\le 1}\Big\vert\, \vert \eta(x)\vert-\vert\eta(y)\vert\,\Big\vert
f(\vert(x,y)\vert) \le\sum_{d(x,y)\le 1}\vert \eta(x)-\eta(y)\vert f(\vert(x,y)\vert).$$
\end{proof}

The next theorem gives a homological description of the inequality
\eqref{equation : Zuk-type isoperimetric inequality} and is the second of the main
results of this paper. It can be
understood as providing a passage from knowing some information ``on
average'' (the inequality) to precise, even distribution of coefficients
of $\psi\in C_1^f(X)$ which satisfies $\partial \psi=[X]$.

\begin{theorem}\label{theorem : vanishing iff inequality holds}
Let $X$ be a bounded geometry quasi-geodesic metric space. The following conditions are equivalent:
\begin{enumerate}
\renewcommand{\labelenumi}{(\Alph{enumi})}
\item $X$ satisfies inequality (\ref{equation : Zuk-type isoperimetric inequality}),
\item $[X]=0$ in $H_0^f(X)$.
\end{enumerate}
\end{theorem}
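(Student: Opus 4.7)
The plan is a duality argument: the easy direction integrates the boundary relation over a finite set, while the hard direction invokes Hahn--Banach applied to the functional form $(I^{\,f}_\nabla)$ from Lemma~\ref{lemma : equivalent forms of isoperimetric inequality}.

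For (B)$\Rightarrow$(A), suppose $\psi \in C_1^f(X)$ satisfies $\partial\psi=[X]$, with propagation $R$ and coefficient bound $|\psi(x,y)|\le K f(|(x,y)|)$. Summing $\partial\psi(x)=1$ over a finite $A\subset X$ and using antisymmetry to cancel the contributions of pairs with both endpoints in $A$ yields
$$\#A \;\le\; 2K\!\!\sum_{\substack{x\in A,\,y\notin A\\ d(x,y)\le R}}\!\!f(|(x,y)|).$$
By quasi-geodesicity any such pair admits a $1$-step path of length $\le R$, which must cross the $1$-boundary $\partial A$, so we can assign to $(x,y)$ a point $b\in\partial A$ within distance $R$ of both endpoints. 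Bounded geometry bounds the number of pairs attributable to a fixed $b$ by $N_R^2$, and $(f_1)$ gives $f(|(x,y)|)\le L f(|b|)$. Summing produces $(I^{\,f}_\partial)$.

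For (A)$\Rightarrow$(B) the plan is Hahn--Banach against $(I^{\,f}_\nabla)$. Let $V$ denote the space of finitely supported real functions on $X$, and let $W$ be the weighted $\ell^1$-space of antisymmetric finitely supported functions $\omega$ on ordered pairs $(x,y)$ with $d(x,y)\le 1$, equipped with the norm $\|\omega\|_W=\sum_{(x,y)}f(|(x,y)|)\,|\omega(x,y)|$. The coboundary $d\eta(x,y)=\eta(y)-\eta(x)$ embeds $V$ in $W$ injectively, because $X$ is connected and any finitely supported locally constant function vanishes. On the subspace $d(V)\subset W$ define the linear functional
$$L_0(d\eta)\;=\;\sum_{x\in X}\eta(x).$$
Inequality $(I^{\,f}_\nabla)$ gives $|L_0(d\eta)|\le \sum_x|\eta(x)| \le D\|d\eta\|_W$, so $L_0$ is bounded of norm $\le D$. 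Extend $L_0$ by Hahn--Banach to a bounded linear $L$ on $W$. Since the dual of a weighted $\ell^1$-space of antisymmetric functions is the space of antisymmetric $\psi$ with $|\psi(x,y)|\le D f(|(x,y)|)$ (antisymmetrizing the representative if necessary, which does not alter its values on antisymmetric $\omega$), $L$ is represented by some $\psi\in C_1^f(X)$ of propagation $\le 1$. The adjointness $\langle\partial\psi,\eta\rangle=\langle\psi,d\eta\rangle$, tested against $\eta=\delta_{x_0}$, gives
$$\langle\partial\psi,\delta_{x_0}\rangle\;=\;L(d\delta_{x_0})\;=\;L_0(d\delta_{x_0})\;=\;1\;=\;\langle[X],\delta_{x_0}\rangle,$$
so $\partial\psi=[X]$ and $[X]=0$ in $H_0^f(X)$.

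The main obstacle is organising the Hahn--Banach setup correctly: one must pick a predual so that its dual is exactly the space of $f$-controlled antisymmetric chains, and must ensure that the abstract extension---after antisymmetrization---is a genuine element of $C_1^f(X)$ whose boundary equals $[X]$ pointwise. The reformulation Lemma~\ref{lemma : equivalent forms of isoperimetric inequality} is what unlocks the argument, since $(I^{\,f}_\nabla)$ is precisely the boundedness of $L_0$ on $d(V)$, whereas the combinatorial inequality $(I^{\,f}_\partial)$ only controls coboundaries of indicator functions. Apart from this, the verification reduces to standard ``integration by parts'' between $\partial$ and $d$.
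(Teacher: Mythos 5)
Your proposal is correct and follows essentially the same route as the paper: the hard direction (A)$\Rightarrow$(B) is the same Hahn--Banach duality argument against the functional form $(I^{\,f}_{\nabla})$, with the only cosmetic difference that you build antisymmetry into the predual $W$ and extend the functional $d\eta\mapsto\sum_x\eta(x)$ directly, whereas the paper constructs $\delta^{-1}$, takes its adjoint, and antisymmetrizes the Hahn--Banach lift afterwards. For (B)$\Rightarrow$(A) you argue combinatorially with indicator functions and handle propagation $R$ by routing each cross-boundary pair through a point of $\partial A$ (using bounded geometry and $(f_1)$), while the paper first reduces to propagation $1$ and tests against arbitrary nonnegative finitely supported $\eta$; both are valid and the conclusions agree.
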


Some comments are in order before we prove the theorem. 
In the uniformly finite case \cite{block-weinberger} the proof of one implication  
relies on the Hahn--Banach theorem, which allows to build a functional
distinguishing between boundaries and the fundamental class. However,
our chain groups have a topology which does not allow to conveniently generalize this argument.
There is another way to view the equivalence \emph{$X$ non-amenable if and only if
$H_0^{\uf}(X)=0$}, which we roughly sketch here (with the notation from the proof of Theorem \ref{theorem : vanishing iff inequality holds} below).

Non-amenability of $X$ is equivalent to the inequality $\|\eta\|_1\leq C\|\delta \eta\|_1$, where $\delta:\ell_1(X)\to\ell_1^{\,\mathrm{ch}}(N\Delta)$ is defined by $\delta\eta(x,y)=\eta(y)-\eta(x)$, $N\Delta$ is the 1-neighborhood of the diagonal in $X\times X$ and $\ell_1^{\,\mathrm{ch}}(N\Delta)$ denotes the 
absolutely summable $1$-chains of propagation at most 1.
As a linear map between Banach spaces $\delta$ is continuous and, by the above inequality, also topologically injective. Since the topological dual of $\ell_1^{\,\mathrm{ch}}(N\Delta)$ is essentially 
$C_1^{\uf}(X)$, this is further equivalent, by duality, to surjectivity of the adjoint map $\widetilde{\partial}:C_1^{\uf}(X)\to C_0^{\uf}(X)$  (the latter space can be simply viewed as $\ell_\infty(X)$) which is, up to a multiplicative constant, the same as our differential. This is however exactly the vanishing
of the $0$-dimensional homology group.

It is this point of view which we use to prove Theorem \ref{theorem : vanishing iff inequality holds}. 
However, if $f$ is not constant, the maps $\partial$ and $\delta$ are not continuous and thus
do not respond  directly to the above argument.
Before embarking on the proof, we record a simple but necessary fact.

\begin{lemma}\label{lemma : cutting down propagation of preimages}
Let $X$ be a bounded geometry quasi-geodesic metric space. Then $[X]=0$ 
in $H_0^f(X)$ if and only if $1_X=\partial \psi$, where $\psi\in C_1^f(X)$ and
$\mathscr{P}(\psi)\le 1$.
\end{lemma}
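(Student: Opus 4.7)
The backward direction is trivial, so I focus on the forward direction. Suppose $[X]=0$ in $H_0^f(X)$, so there exists $\psi\in C_1^f(X)$ with $\partial\psi=1_X$ and finite propagation $R:=\mathscr{P}(\psi)$. The plan is to subdivide each edge of $\psi$ into a telescoping path of length-$\le 1$ edges using quasi-geodesicity, and verify that the coefficients of the resulting $1$-chain still grow like $f$.

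For every ordered pair $(x,y)\in X^2$ with $d(x,y)\le R$, quasi-geodesicity provides a sequence $x=z_0(x,y),z_1(x,y),\dots,z_n(x,y)=y$ with $d(z_i,z_{i+1})\le 1$ and $n\le d(x,y)\le R$; fix such a choice once and for all, in an antisymmetric way (i.e., reading the sequence for $(y,x)$ backwards). Form the telescoping $1$-chain
$$P(x,y)=\sum_{i=0}^{n-1}[z_i(x,y),z_{i+1}(x,y)],$$
which has propagation $\le 1$ and satisfies $\partial P(x,y)=[y]-[x]=\partial[x,y]$. Then set
$$\psi'=\sum_{(x,y)\,:\,d(x,y)\le R}\psi(x,y)\,P(x,y).$$
By construction $\partial\psi'=\partial\psi=1_X$ and $\mathscr{P}(\psi')\le 1$, so it only remains to check that $\psi'\in C_1^f(X)$.

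Fix an edge $(u,v)$ with $d(u,v)\le 1$. The pair $(u,v)$ can appear in $P(x,y)$ only when both $u$ and $v$ lie on the chosen path from $x$ to $y$, which forces $x,y\in B(u,R+1)$. By bounded geometry the number of such pairs $(x,y)$ is at most $N_{R+1}^{\,2}$. Moreover $|(x,y)|\le|u|+R+1\le|(u,v)|+R+1$, so by condition \eqref{equation : f has the same growth under shifts} there is a constant $L=L(R)$ with $f(|(x,y)|)\le L\,f(|(u,v)|)$. Combining, we get
$$|\psi'(u,v)|\le\sum_{(x,y)}|\psi(x,y)|\le N_{R+1}^{\,2}\cdot K_\psi\cdot L\cdot f(|(u,v)|),$$
where $K_\psi$ is the constant controlling the growth of $\psi$. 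Hence $\psi'\in C_1^f(X)$.

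The only genuinely delicate point is the bookkeeping in the last paragraph: one must pick the subdivisions once and for all and verify that only boundedly many original edges feed into any given length-$1$ edge; I expect this, together with invoking $(f_1)$ to absorb the bounded shift in the argument of $f$, to be the main technical obstacle, but both are essentially immediate from bounded geometry and the standing assumptions on $f$.
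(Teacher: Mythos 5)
Your proof is correct and follows essentially the same route as the paper, which simply says to replace each long edge $[x,y]$ by a telescoping chain of unit edges along a quasi-geodesic; you have supplied the bookkeeping (bounded geometry bounds the number of original edges feeding into a given unit edge, and condition \eqref{equation : f has the same growth under shifts} absorbs the bounded shift in the argument of $f$) that the paper leaves implicit.
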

\begin{proof}
To prove the nontrivial direction replace every edge $[x,y]$ with  $d(x,y)>1$ and
a non-zero coefficient $a(x,y)$ by the chain
$a(x,y)\sum [x_i,x_{i+1}]$ where the $x_i$ are given by the quasi geodesic condition,
so that $d(x_i,x_{i+1})\le 1$. 
\end{proof}

\begin{proof}[Proof of Theorem \ref{theorem : vanishing iff inequality holds}] 
We equip $X\times X$ with the measure $\nu(x,y)=f(\vert (x,y)\vert)$.
Let $N\Delta=\setd{(x,y)\in X\times X}{d(x,y)\le 1}$ denote the 1-neighborhood of the diagonal. We consider the linear space
$$\ell_{\infty}^{\,f}(N\Delta)=\setd{ \psi:N\Delta\to \RR}
{\sup_{(x,y)\in N\Delta}\dfrac{\vert \psi(x,y)\vert}{f(\vert(x,y)\vert)}< \infty}$$
with the norm
$$\Vert \psi\Vert_{\infty}^f=\sup_{(x,y)\in N\Delta}\dfrac{\vert\psi(x,y)\vert}{f(\vert (x,y)\vert)}.$$ 
Denote also
$$\ell_1(N\Delta,\nu)=\setd{\psi:N\Delta\to \RR}{\sum_{(x,y)\in N\Delta}
\vert\psi(x,y)\vert\nu(x,y)<\infty}$$
and equip it with the norm
$$\Vert \psi\Vert_{1,\nu} = \sum_{(x,y)\in N\Delta}\vert \psi(x,y)\vert \nu(x,y).$$
For $\psi,\phi:X^n\to\RR$, we denote
$$\langle \psi,\phi\rangle=\sum_{\overline{x}\in X^{n+1}}\psi(\overline{x})\phi(\overline{x}),$$
whenever this expression makes sense. The topological dual of $\ell_1(N\Delta,\nu)$ with respect to this pairing is 
$\ell_{\infty}^{\,f}(N\Delta)$.
Let $\finsup$ denote finitely supported functions on $X$ and define $\delta: \finsup\to \ell_1(N\Delta,\nu)$ to be the map defined by 
$$\delta\eta(x,y)=\eta(y)-\eta(x), \quad \text{for }d(x,y)\leq1.
$$
Then define a linear operator $\widetilde{\partial}:\ell_{\infty}^{\,f}(N\Delta)\to \ell_{\infty}^f(X)$
by setting
$$\widetilde{\partial}\psi(x)=\sum_{y\in B(x,1)}\ \psi(y,x)-\psi(x,y).$$
On chains,  $\widetilde{\partial}$ is algebraically dual to $\delta$ with the above pairings, i.e. 
$$\langle \eta,\widetilde{\partial}\psi\rangle=\langle \delta\eta,\psi\rangle$$
as finite sums for all $\eta\in \finsup$ and $\psi\in C^{\,f}_1(X)$ such that $\mathscr{P}(\psi)\le 1$. Note that $\widetilde{\partial}$
is also a linear extension of $2\partial$ from $C_{1}^f(X)$ to $\ell_{\infty}^{\,f}(N\Delta)$ for such $\psi$.\\

\noindent (B)  $\Longrightarrow$ (A).  Assume that there exists a chain $\psi\in C^{\,f}_1(X)$ such that $1_{X}=\partial \psi$ and $\mathscr{P}(\psi)=1$ (by 
Lemma \ref{lemma : cutting down propagation of preimages}).
For a non-negative function $\eta\in \finsup$ we have

\begin{eqnarray*}
\Vert \eta\Vert_1&=&\sum_{x\in X}  \eta(x)\\\\
&= &\sum_{x\in X} \eta(x)\cdot \partial\psi(x)\\\\
&= &\frac{1}{2}\sum_{(x,y)\in X\times X}\delta\eta(x,y) \cdot \psi(x,y)\\\\
&\le&\frac{1}{2}\sum_{d(x,y)\le 1} \vert \eta(x)-\eta(y)\vert\ \vert\psi(x,y)\vert\\\\
&\le&C\ \sum_{d(x,y)\le 1} \vert \eta(x)-\eta(y)\vert\ f(\vert (x,y)\vert).
\end{eqnarray*}
By Lemma \ref{lemma : equivalent forms of isoperimetric inequality}, we are done.\\

\noindent (A) $\Longrightarrow$ (B). By Lemma \ref{lemma : cutting down propagation of
preimages} there
 is no loss 
of generality by restricting to functions of propagation 1.
Let $\delta\finsup$ denote the image of $\finsup$ under $\delta$.
By Lemma \ref{lemma : equivalent forms of isoperimetric inequality}
we rewrite the inequality (\ref{equation : Zuk-type isoperimetric inequality}) 
in the functional form as (\ref{equation : functional version of Zuk's ineq}) i.e.,
 $$\sum \vert \eta(x) \vert\le C\left(\sum_{d(x,y)\le 1}\vert \eta(x)-\eta(y)\vert f(\vert (x,y)\vert)\right)$$ 
 for every $\eta\in \finsup$.
If we interpret this inequality in terms of the norms of elements of $\finsup$
and $\delta\finsup$ it reads
$\Vert \eta\Vert_1\le C\Vert \delta\eta\Vert_{1,\nu}$.
The map 
$\delta: \finsup\to \delta\finsup$ is a bijection, thus there exists an inverse 
$\delta^{\,-1}:\delta\finsup\to \finsup$.
The inequality implies that this inverse is continuous. We extend it to 
a continuous map 
$$\delta^{\,-1}:\overline{\delta\finsup}\to \ell_1(X),$$
where $\overline{\delta\finsup}$ is the norm closure of $\delta\finsup$  in $\ell_1(N\Delta,\nu)$.
This map  induces a continuous adjoint map 
$(\delta^{\,-1})^*:C_0^{\uf}(X)\to {\overline{\delta\finsup}^{\,*}}$
between the dual spaces, which satisfies
$$\left\langle \delta\eta, (\delta^{\,-1})^* \zeta \right\rangle= \langle \eta,\zeta\rangle$$
for $\eta\in \finsup$ and $\zeta\in C_0^{\uf}(X)$.
We thus have the following diagrams, where the top one is dual to the bottom one:
\baselineskip=10pt

$$
\begin{diagram}
\ell_{\infty}^{\,f}(N\Delta)		& &	\\
\dOnto^{i^*}	&	&\\
\overline{\delta\finsup}^{\,*}& \lTo^{\ \ \ \ (\delta^{\,-1})^*\ \ \ \ }& C_0^{\uf}(X)
\end{diagram}
$$
\newline
$$
\begin{diagram}
\overline{\delta\finsup}&\pile{ \rTo^{\ \ \ \ \ \ \delta^{-1}\ \ \ \ \ \ \ } \\ \lDotsto_{\delta\ \ \ \ \ \ \ }}& \ell_1(X)\ \ \ \ \\
\dInto^{i}&&\\
\ell_1(N\Delta,\nu)
\end{diagram}\\
$$
\baselineskip=14pt
\newline
\noindent Here $i$ is the natural injection, $i^*$ is the restriction and the dashed arrow
denotes a discontinuous map which is densely defined on $\ell_1(X)$. 

Note now that if $\eta\in\finsup$ and $\psi\in \ell_{\infty}^f(X)$ is a 1-chain, the duality 
$\langle \widetilde{\partial} \psi,\eta\rangle=\langle \psi,\delta \eta\rangle$ says that 
$\psi$ is determined on $\delta\finsup$ only. This allows to construct a
preimage of the fundamental class in the following way.
Take $\phi$ to be any element in $\ell_{\infty}^{\,f}(N\Delta)$ such that 
$$i^{\,*}\phi=(\delta^{\,-1})^*1_{X},$$
(for instance any extension of $(\delta^{\,-1})^*1_{X}$ to a functional on the whole $\ell_1(N\Delta;\nu)$
guaranteed by the Hahn-Banach theorem). 
Then $\phi$ is an element of 
$\ell_{\infty}^{\,f}(N\Delta)$ and might not a priori belong to $C_1^f(X)$. 
To correct this we anti-symmetrize $\phi$.
Denote the transposition $\phi^T(x,y)=\phi(y,x)$. We define
$$\psi=\phi-\phi^T.$$
Then $\psi^T=-\psi$ so that $\psi\in C_1^f(X)$ and 
we will now show that $\partial \psi=1_{X}$ in $C_0^{f}(X)$. 
For every $x\in X$ and its characteristic function $1_x$ we have

\begin{eqnarray*}
\widetilde{\partial}\psi(x)&=&\left\langle 1_x, \widetilde{\partial} \psi\right\rangle\\
&=&\left\langle \delta 1_x, \psi \right\rangle\\
&=&\left\langle\delta 1_x,\phi\right\rangle-\left\langle\delta 1_x,\phi^T\right\rangle\\
&=&\left\langle\delta 1_x,\phi\right\rangle+\left\langle\delta 1_x^T,\phi^T\right\rangle\\
&=&2\left\langle \delta 1_x,\phi\right\rangle\\
&=&2\left\langle  \delta 1_x, i^{\,*}\phi  \right\rangle\\
&=&2\left\langle \delta 1_x, (\delta^{\,-1})^*1_{X} \right\rangle\\
&=&2\left\langle  1_x,1_{X}\right\rangle\\
&=& 2.\\
\end{eqnarray*}
In the above calculations we used the fact that $\delta 1_x^T=-\delta 1_x$ and that $\langle \psi,\phi\rangle=
\langle \psi^T,\phi^T\rangle$. Finally since $2\partial =\widetilde{\partial}$ so $\partial\psi=1_X$.
\end{proof}

It is interesting to note that  the linear space of 1-chains of propagation
at most 1 is a complemented subspace of $\ell_{\infty}^{\,f}(N\Delta)$. Indeed,
the anti-symmetrizing map $P(\phi)=\phi-\phi^T$ is a bounded projection from
$\ell_{\infty}^f(N\Delta)$ onto that subspace.
Consequently our argument shows that actually for any $\eta\in C_0^{\uf}(X)=\ell_\infty(X)$,
and a lifting  $\phi_{\eta}\in\ell_{\infty}^f(N\Delta)$ of $(\delta^{\,-1})^*\eta$, the projection $P\phi_{\eta}$ satisfies $\partial P\phi_{\eta}=\eta$, so that
$P$ and $(\delta^{\,-1})^*$ together with the Hahn-Banach theorem are used to construct a right inverse to $\widetilde{\partial}$. In other words, we get the following

\begin{corollary}
For a bounded geometry metric space $X$ the following are equivalent:
\begin{enumerate}
\item $[X]=0$ in $H_0^f(X)$,
\item the homomorphism $\jmath_0^{\,f}:H_0^{\uf}(X)\to H_0^f(X)$, induced by inclusion of chains, is trivial. 
\end{enumerate}
\end{corollary}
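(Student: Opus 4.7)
The plan is to obtain the easy implication by inspection and the nontrivial one by reusing the construction from the proof of Theorem \ref{theorem : vanishing iff inequality holds} verbatim, with only the ``source'' of the argument altered.

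For (2) $\Rightarrow$ (1), I would simply observe that the characteristic function $1_X$ is a bounded 0-chain, hence a $0$-cycle in $C_0^{\uf}(X)$, and the map $\jmath_0^{\,f}$ sends its class in $H_0^{\uf}(X)$ to exactly the fundamental class $[X]\in H_0^f(X)$. Triviality of $\jmath_0^{\,f}$ thus forces $[X]=0$.

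For (1) $\Rightarrow$ (2), the plan is to upgrade the proof of Theorem \ref{theorem : vanishing iff inequality holds} from ``$[X]$ is a boundary'' to ``\emph{every} uniformly finite 0-cycle is a boundary in $C_1^f(X)$.'' By Theorem \ref{theorem : vanishing iff inequality holds}, hypothesis (1) is equivalent to the isoperimetric inequality \eqref{equation : Zuk-type isoperimetric inequality}, which via Lemma \ref{lemma : equivalent forms of isoperimetric inequality} yields $\|\eta\|_1\le C\|\delta\eta\|_{1,\nu}$ for $\eta\in\finsup$. This is exactly the input used in the (A)$\Rightarrow$(B) part of the proof of that theorem, and it produces the continuous inverse $\delta^{-1}\colon \overline{\delta\finsup}\to\ell_1(X)$ and its adjoint $(\delta^{-1})^*\colon C_0^{\uf}(X)\to \overline{\delta\finsup}^{\,*}$. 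I would then repeat the anti-symmetrization construction from that proof, but now starting from an arbitrary $\eta\in\ell_\infty(X)=C_0^{\uf}(X)$ in place of $1_X$: pick any Hahn--Banach extension $\phi_\eta\in\ell_\infty^{\,f}(N\Delta)$ of $(\delta^{-1})^*\eta$, and set $\psi_\eta=\phi_\eta-\phi_\eta^T\in C_1^f(X)$.

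The only step that uses the specific form of $1_X$ is the final chain of identities computing $\widetilde{\partial}\psi$, and I expect the main (minor) obstacle to be checking that this calculation goes through for general $\eta$. Copying the same sequence of equalities verbatim, one obtains
\[
\widetilde{\partial}\psi_\eta(x)=2\langle \delta 1_x,\phi_\eta\rangle=2\langle \delta 1_x,(\delta^{-1})^*\eta\rangle=2\langle 1_x,\eta\rangle=2\eta(x),
\]
so that $\partial\psi_\eta=\eta$ in $C_0^f(X)$. Hence $\jmath_0^{\,f}[\eta]=[\eta]=0$ in $H_0^f(X)$ for every $\eta\in C_0^{\uf}(X)$, which means $\jmath_0^{\,f}$ is identically zero. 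This finishes (1)$\Rightarrow$(2) and the corollary. Indeed, this is precisely the content of the remark immediately preceding the statement, which already observes that the construction yields a right inverse to $\widetilde{\partial}$ on all of $\ell_\infty(X)$; the corollary just repackages that observation.
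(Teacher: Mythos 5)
Your proposal is correct and follows exactly the paper's intended argument: the authors prove this corollary precisely by observing that the Hahn--Banach extension and anti-symmetrization construction from the proof of Theorem \ref{theorem : vanishing iff inequality holds} applies verbatim with an arbitrary $\eta\in C_0^{\uf}(X)=\ell_\infty(X)$ in place of $1_X$, yielding a right inverse to $\widetilde{\partial}$. Your verification that the final chain of identities gives $\widetilde{\partial}\psi_\eta=2\eta$ is exactly the point, and the easy direction is handled the same way.
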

\noindent The above fact was known in the uniformly finite case \cite[Proposition 2.3]{block-weinberger}. Another corollary of the
proof is a different proof, via the isoperimetric inequality, of Lemma \ref{lemma : bounded below chains vanish then [G] vanishes}.
We can also
identify $\overline{\delta\finsup}^*$ with $\ell_{\infty}^{\,f}(N\Delta)\big/ \operatorname{Ann}(\overline{\delta\finsup})$ up to an isometric isomorphism, where $\operatorname{Ann}(E)=\setd{\psi\in E^*}{\langle \psi,\eta\rangle=0 \text{ for }\eta\in E}$ is the annihilator of $E$.

\begin{remark}\normalfont
Note that if $f\equiv \mathrm{const}$ and we replace $\ell_1$ and $\ell_{\infty}$ by $\ell_p$ and $\ell_q$
respectively ($1< p,q< \infty$, $1/p+1/q=1$), then the above argument together with the fact that non-amenability is equivalent to
$\Vert \eta\Vert_q\le C\Vert \delta\eta\Vert_q$ for any $1\le q<\infty$, gives a different proof of a theorem 
in \cite{whyte} and \cite{elek},
namely
that vanishing of the  $0$-th $\ell_q$-homology group
characterizes non-amenability of metric spaces. 
\end{remark}

\begin{remark}\normalfont
The above proof gives a dual characterization of the best constant $C$ in the
isoperimetric inequality in terms of the distance from the origin to the affine subspace
$\partial^{-1}(1_X)$ in $\ell_{\infty}^f(X)$. In particular for $f\equiv 1$ this applies to 
the Cheeger constant of $X$.
\end{remark}

\begin{remark}\normalfont
Combining the result of \.{Z}uk \cite[Theorem 1]{zuk} with Theorem \ref{theorem : vanishing iff inequality holds} gives another proof of the Theorem \ref{theorem : linear preimage via the spread tail construction}, but of course the proof in the previous section has the advantage of being constructive 
while the above is only an existence statement.
\end{remark}

\section{Examples}\label{section : examples}

\subsection*{Wreath products} 
In \cite{zuk} \.{Z}uk proved the inequality (\ref{equation : Zuk-type isoperimetric inequality})
with $f(t)=t$ on any finitely generated group and asked if there are groups for
which $f$ can be chosen to be of slower growth. 
Examples of such groups were constructed by Erschler \cite{erschler}.
Recall that the (restricted) wreath product is defined as the semidirect product 
$$G\wr H=\left(\bigoplus_{h\in H} G\right)\rtimes H$$
where $H$ acts on $\bigoplus_{h\in H} G$ by translation of coordinates.
Using Erschler's results, together with the above characterization, we exhibit finitely generated groups for which $[\Gamma]=0$
in $H_0^f(\Gamma)$ with $f$ growing strictly slower than linearly. In \cite{erschler} Erschler 
showed that for a group of the form $F\wr \ZZ^d$, where $d\ge 2$ and $F$ is a non-trivial 
finite group, inequality 
(\ref{equation : Zuk-type isoperimetric inequality}) holds with $f(n)= n^{1/d}$.
This gives
\begin{corollary}
Let $\Gamma=F \wr \ZZ^d$ for a non-trivial finite group $F$. Then  $[\Gamma]=0$ 
in $H^{f}_0(\Gamma)$, where $f(n)=n^{1/d}$.
\end{corollary}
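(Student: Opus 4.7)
The plan is to simply combine two ingredients already in hand. By Theorem \ref{theorem : vanishing iff inequality holds}, the vanishing of $[\Gamma]$ in $H_0^f(\Gamma)$ is equivalent to $\Gamma$ (viewed as a quasi-geodesic bounded geometry metric space with the word length metric coming from some finite generating set) satisfying the isoperimetric inequality \eqref{equation : Zuk-type isoperimetric inequality} for the function $f(n)=n^{1/d}$. So the only thing to verify is that inequality.

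First I would note that $\Gamma = F \wr \ZZ^d$, being a finitely generated group with polynomial (in fact of the wreath type) growth data controlled by the factor $\ZZ^d$, is a bounded geometry quasi-geodesic space, so Theorem \ref{theorem : vanishing iff inequality holds} applies. Then I would invoke Erschler's result from \cite{erschler}, which asserts precisely that for $d\geq 2$ and a non-trivial finite group $F$, the group $F\wr \ZZ^d$ satisfies
\[
\#A \leq C \sum_{x\in\partial A} \vert x\vert^{1/d}
\]
for every finite $A\subset \Gamma$, with a constant $C>0$ independent of $A$. This is exactly inequality \eqref{equation : Zuk-type isoperimetric inequality} with $f(n)=n^{1/d}$. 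One should also check that $f(n)=n^{1/d}$ satisfies the mild hypotheses \eqref{equation : f has the same growth under shifts} and \eqref{equation : f(Ct) le Df(t)} on the control function, but this is immediate for power functions with a positive exponent.

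Combining these observations, Theorem \ref{theorem : vanishing iff inequality holds} yields $[\Gamma]=0$ in $H_0^f(\Gamma)$.

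There is no serious obstacle here: the proof is essentially a citation of Erschler's isoperimetric estimate composed with the equivalence established in the previous section. The only tiny points of care are the verification of the admissibility conditions \eqref{equation : f has the same growth under shifts}--\eqref{equation : f(Ct) le Df(t)} for $f(n)=n^{1/d}$ and the observation that the constant $C$ in Erschler's inequality may depend on the choice of generating set, but quasi-isometry invariance (Corollary \ref{corollary : QI invariance}) makes this irrelevant for the homological conclusion.
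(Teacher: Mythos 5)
Your proposal is correct and follows exactly the route the paper takes: the corollary is obtained by citing Erschler's isoperimetric inequality $\#A\le C\sum_{x\in\partial A}\vert x\vert^{1/d}$ for $F\wr\ZZ^d$ and feeding it into Theorem \ref{theorem : vanishing iff inequality holds}. Your added remarks on the admissibility conditions for $f(n)=n^{1/d}$ and on the generating-set independence via Corollary \ref{corollary : QI invariance} are harmless and correct.
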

It is also possible to exhibit groups for which the fundamental
class vanishes in $H_0^f(\Gamma)$ for $f(n)= \ln n$.
\begin{corollary}
Let $\Gamma=F\wr(F\wr\ZZ)$ for a non-trivial finite group $F$. Then $[\Gamma]=0$ in
$H_0^f(\Gamma)$, where $f(n)=\ln n$.
\end{corollary}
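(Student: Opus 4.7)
The approach is to combine Theorem~\ref{theorem : vanishing iff inequality holds} with Erschler's isoperimetric estimates from \cite{erschler}, paralleling the proof of the previous corollary. First, the function $f(n)=\ln n$, after a harmless modification on a bounded set to ensure $f(0)=1$, satisfies the regularity conditions $(f_1)$ and $(f_2)$ since $\ln(t+K)-\ln(t)$ is bounded for $t$ bounded away from zero and $\ln(Kt)=\ln(K)+\ln(t)$. Theorem~\ref{theorem : vanishing iff inequality holds} therefore reduces the vanishing of $[\Gamma]$ in $H_0^f(\Gamma)$ to verifying that $\Gamma=F\wr(F\wr\ZZ)$ satisfies the isoperimetric inequality $(I^{\,f}_{\partial})$ with $f(n)=\ln n$.

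The required inequality is then extracted from Erschler's study of Følner profiles of iterated wreath products in \cite{erschler}. The key fact is that $H=F\wr\ZZ$ has exponential growth and exponential Følner function, so that taking one further wreath product $\Gamma=F\wr H$ produces a group with doubly-exponential Følner function. Concretely, Erschler constructs a Følner sequence $A_n\subset\Gamma$ with $\#\partial A_n/\#A_n\lesssim 1/\log\log \#A_n$; because $\Gamma$ has exponential growth, the sets $A_n$ sit inside balls $B(e,R_n)$ with $R_n\asymp\log\#A_n$, and therefore $\log R_n\asymp\log\log\#A_n$. This yields $\#A_n\le C(\log R_n)\cdot\#\partial A_n\le C\sum_{x\in\partial A_n}\log|x|$, which is $(I^{\,f}_{\partial})$ on the Følner sequence. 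The layer-cake / co-area argument used in the proof of Lemma~\ref{lemma : equivalent forms of isoperimetric inequality} then extends it to every finite $A\subset\Gamma$.

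The main obstacle is the bookkeeping needed to translate Erschler's estimates (phrased in terms of Følner functions and the non-weighted isoperimetric profile) into the pointwise, basepoint-weighted form $(I^{\,f}_{\partial})$. This rests on exponential growth of $\Gamma$, which makes $\log\#B(e,R)\asymp R$ and thus permits replacing the diameter-based weight $\log R$ by the distance-based weight $\log|x|$ inside the sum over $\partial A$. Once this conversion is carried out, the statement follows exactly as in the corollary for $F\wr\ZZ^d$ treated above.
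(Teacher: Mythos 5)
Your overall strategy---reduce via Theorem~\ref{theorem : vanishing iff inequality holds} to the isoperimetric inequality $(I^{\,f}_{\partial})$ with $f(n)=\ln n$ and invoke Erschler---is exactly the paper's route, and your first paragraph (checking $(f_1)$, $(f_2)$ and setting up the reduction) is fine. The paper offers no further argument: Erschler proves in \cite{erschler} that $F\wr(F\wr\ZZ)$ satisfies $(I^{\,f}_{\partial})$ with $f(n)=\ln n$ for \emph{all} finite sets, and that is the input one should cite. Your attempt to rederive this from F\o lner-function data is where the proof breaks down.

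There are three concrete gaps. First, a direction confusion: a F\o lner sequence $A_n$ with $\#\partial A_n/\#A_n\lesssim 1/\log\log\#A_n$ is an \emph{upper} bound on the isoperimetric quotient for certain special sets, whereas $(I^{\,f}_{\partial})$ is a \emph{lower} bound on the weighted boundary sum for \emph{every} finite set; the former does not imply the latter (and your displayed chain silently flips $\lesssim$ into the inequality $\#A_n\le C(\log R_n)\#\partial A_n$, which needs the lower-bound direction). Second, the step $C(\log R_n)\cdot\#\partial A_n\le C\sum_{x\in\partial A_n}\log|x|$ goes the wrong way: containment $A_n\subset B(e,R_n)$ gives only $\log|x|\le\log R_n$ for $x\in\partial A_n$, so it yields an upper bound on $\sum_{x\in\partial A_n}\log|x|$, not the lower bound you need; the boundary of an arbitrary finite set can be concentrated near the basepoint. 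Third, the co-area argument in Lemma~\ref{lemma : equivalent forms of isoperimetric inequality} passes from the set inequality (assumed for all finite sets) to the functional inequality $(I^{\,f}_{\nabla})$; it cannot upgrade an inequality verified only on a F\o lner sequence to one valid for all finite sets. The fix is simply to quote Erschler's theorem for iterated wreath products directly, as the paper does, rather than to reconstruct it from the F\o lner profile.
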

Iteration of the wreath product leads to successively slower growing functions, see \cite{erschler}.


\subsection*{Polycyclic groups} Let us denote 
$$\rad F=\min \setd{r}{F\subseteq B(g,r), g\in \Gamma}.$$

\begin{definition}Let $\Gamma$ be an infinite, amenable group. We define the isodiametric profile
 $\isod_{\Gamma}:\NN\to \NN$ of $\Gamma$ by the formula
$$\isod_{\Gamma}(r)=\sup\ \dfrac{\#F}{\#\partial F},$$
where the supremum is taken over all finite sets $F\subset \Gamma$ with the property 
$\rad F\le r$.

\end{definition}
In other words, the isodiametric profile finds finite sets with the smallest boundary
 among 
those with prescribed diameter. It can be equivalently described as the smallest function 
$\isod$ such
that the inequality $ \dfrac{\# F}{\# \partial F}\le \isod\left(\rad F\right)$
holds for all finite subsets $F\subseteq \Gamma$.  It is easy to see that
$\isod$ is sublinear,
in the sense that $D(n)\le Cn$ for some $C>0$ and all $n>0$. Also, in some sense, $D$ is an 
inverse of the function $\mathrm{A}_X$ introduced in \cite{nowak-exactiso}.

\begin{proposition}
  Let $\Gamma$ be a finitely generated group. If $\Gamma$ 
  satisfies inequality (\ref{equation : Zuk-type isoperimetric inequality}) then there exists $C>0$ 
  such that   $$\isod\le Cf.$$
  In particular, the above estimate holds when the fundamental class of $[{\Gamma}]$
  vanishes in $H_0^{f}({\Gamma})$.
\end{proposition}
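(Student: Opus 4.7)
The plan is to reduce the estimate to the isoperimetric inequality $(I^{\,f}_\partial)$ by a translation trick, exploiting the homogeneity of $\Gamma$ as a group (i.e.\ that both $\#F$, $\#\partial F$ and $\rad F$ are translation-invariant while $f(|x|)$ is not). The role of the assumption $(f_1)$ will be to absorb a single unit of slack coming from the boundary layer.

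First, I would fix $r>0$ and take any finite set $F\subseteq\Gamma$ with $\rad F\le r$. By definition of $\rad$, there exists $g\in\Gamma$ with $F\subseteq B(g,r)$; setting $F'=g^{-1}F$, the left-invariance of the word metric gives $\#F'=\#F$, $\#\partial F'=\#\partial F$ and $F'\subseteq B(e,r)$. This is the key reduction: the ratio $\#F/\#\partial F$ is invariant under translation, but the inequality $(I^{\,f}_\partial)$ involves the function $f(|x|)$, which depends on distance from the basepoint $e$, so we need the set sitting near $e$ in order to apply the hypothesis cleanly.

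Next, I apply $(I^{\,f}_\partial)$ to $F'$. Since each $x\in\partial F'$ lies within distance $1$ of $F'\subseteq B(e,r)$, we have $|x|=d(x,e)\le r+1$. Monotonicity of $f$ together with $(f_1)$ yields a constant $L>0$ (depending only on $f$) with $f(|x|)\le f(r+1)\le Lf(r)$. Combining these estimates,
$$
\#F=\#F'\le C\sum_{x\in\partial F'}f(|x|)\le CL\,f(r)\cdot\#\partial F'=CL\,f(r)\cdot\#\partial F.
$$
Rearranging gives $\#F/\#\partial F\le CL\,f(r)$ for every finite $F$ with $\rad F\le r$, and taking the supremum produces $\isod(r)\le CL\,f(r)$, which is the desired bound.

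For the ``in particular'' clause, if $[\Gamma]=0$ in $H_0^f(\Gamma)$, then by Theorem \ref{theorem : vanishing iff inequality holds} the group $\Gamma$ satisfies $(I^{\,f}_\partial)$, and the previous paragraph applies verbatim. I do not expect any serious obstacle here; the only subtle point is remembering to invoke $(f_1)$ to pass from $f(r+1)$ to $f(r)$ at a uniform multiplicative cost.
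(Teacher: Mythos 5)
Your proof is correct and follows essentially the same route as the paper: translate $F$ to the origin, apply $(I^{\,f}_{\partial})$, and bound $f(|x|)$ for $x\in\partial F$ using monotonicity of $f$. You are in fact slightly more careful than the paper's own argument, which silently absorbs the extra unit of distance for boundary points lying outside $B(e,\rad F)$, whereas you explicitly invoke $(f_1)$ to handle it.
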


\begin{proof}
  Let $F\subset \Gamma$ be a finite subset. Translating to the origin we can assume that
  $F\subseteq B(e, \rad F)$. By inequality
  \eqref{equation : Zuk-type isoperimetric inequality} and monotonicity of
  $f$ we obtain
  $$
  \#F\leq C \sum_{x\in \partial F}f(|x|)\leq \#\partial F\cdot C f(\rad F),$$
  which proves the claim.
\end{proof}

It is well-known that for infinite polycyclic groups $D$ grows linearly \cite{pittet}. Thus we have
\begin{corollary}
Let ${\Gamma}$ be an infinite, polycyclic group. Then  the fundamental class $[{\Gamma}]$ 
vanishes in 
$H^f_0({\Gamma})$ if and only if $f$ is linear.
\end{corollary}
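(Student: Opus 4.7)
The plan is to derive both implications directly by assembling results already established in the paper, with no new construction needed. The forward direction follows from the general existence result for linear 1-chains, while the reverse direction uses the homological characterization of the isoperimetric inequality together with the cited fact about the isodiametric profile of polycyclic groups.

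For the ``if'' direction, suppose $f$ has at least linear growth. Since $\Gamma$ is an infinite, finitely generated group, Theorem \ref{theorem : linear preimage via the spread tail construction} produces an explicit chain $\psi \in C_1^{\lin}(\Gamma)$ of propagation $1$ with $\partial \psi = [\Gamma]$. Because the coefficients of $\psi$ grow at most linearly in $|\,\cdot\,|$ and $f$ dominates a linear function (with $f$ non-decreasing and satisfying $(f_1)$), we have $\psi \in C_1^f(\Gamma)$. Hence $[\Gamma] = 0$ in $H_0^f(\Gamma)$. Note that this direction uses nothing about polycyclicity.

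For the ``only if'' direction, suppose $[\Gamma] = 0$ in $H_0^f(\Gamma)$. By Theorem \ref{theorem : vanishing iff inequality holds}, $\Gamma$ satisfies the isoperimetric inequality \eqref{equation : Zuk-type isoperimetric inequality}. The Proposition immediately preceding this corollary then yields a constant $C > 0$ such that
\[
 \isod_{\Gamma}(r) \le C\,f(r) \quad \text{for all } r > 0.
\]
Now we invoke the cited theorem of Pittet that for an infinite polycyclic group $\Gamma$ the isodiametric profile grows linearly: there exists $c > 0$ with $\isod_{\Gamma}(r) \ge c\,r$ for all sufficiently large $r$. Combining the two estimates gives $c\,r \le C f(r)$, so $f$ grows at least linearly, which is the asserted ``linearity'' (in the paper's usage of $H^{\lin}_0$).

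The main ``obstacle'' is really only the conceptual one of linking the correct pieces: the homological hypothesis must be converted into an isoperimetric statement via Theorem \ref{theorem : vanishing iff inequality holds}, then into an estimate on $\isod_{\Gamma}$ via the preceding proposition, and finally compared with the known linear lower bound on $\isod_{\Gamma}$ for polycyclic groups. No new calculation is required, and the argument shows that polycyclic groups sit at the extreme end of the spectrum traced out earlier by the wreath product examples, where $f$ cannot be taken sub-linear.
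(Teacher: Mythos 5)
Your proposal is correct and follows essentially the same route the paper intends: the ``if'' direction is Theorem \ref{theorem : linear preimage via the spread tail construction}, and the ``only if'' direction chains Theorem \ref{theorem : vanishing iff inequality holds} with the preceding proposition ($\isod\le Cf$) and Pittet's linear lower bound on the isodiametric profile of infinite polycyclic groups. The paper leaves this assembly implicit, so your write-up is simply a spelled-out version of the same argument.
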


\section{Obstructions to weighted Poincar\'{e} inequalities}
In \cite{li-wang} the authors studied a weighted Poincar\'{e} inequality of the form
\begin{equation}\tag{$P_{\rho}$}\label{equation : continuous weighted Poincare inequality} 
\int_M \eta(x)^2 \rho(x)\ v\ \le \ C \int_M \vert \nabla\eta\vert^2\ v
\end{equation}
($v$ is the volume form) for $\rho>0$ and its applications to rigidity of manifolds. For the purposes
of \cite{li-wang} it is useful to know what $\rho$ one can choose since the rigidity
theorems of that paper hold under the assumption that the curvature is bounded below by $\rho$. 
Just as with $f$, we assume  throughout  
that $\rho$ is a function of the distance from a base point and that $\rho> 0$.

In this section we establish a relation between the inequalities 
(\ref{equation : Zuk-type isoperimetric inequality}) and 
(\ref{equation : continuous weighted Poincare inequality}). First we need an auxiliary notion.
Let $f:[0,\infty)\to [0,\infty)$ be non-decreasing. We will say that 
\emph{$f$ is a slowly growing function} if
 for every $\varepsilon>0$ there is a $t_0$
such that for every $t>t_0$ we have $f(t+1)\le  f(t)+\varepsilon$.  
Examples of such $f$ are all convex sublinear functions, for instance
$f(t)=t^{\alpha}$
for $\alpha<1$ and $f(t)=\ln t$. A sufficient condition is that $f$ is convex and
 $\lim_{t\to \infty} f'(t)=0$.
The main result of this section is the following.

\begin{theorem}\label{theorem : weighted Poincare implies vanishing} 
Let $f:[0,\infty)\to [0,\infty)$ be  a non-decreasing, slowly growing function. Let
$M$ be a compact manifold 
such that the universal cover satisfies the weighted
Poincar\'{e} inequality (\ref{equation : continuous weighted Poincare inequality}) with weight $\rho:\widetilde{M}\to \RR$ given by $\rho(x)=1/f(d(x,x_0))$ for 
every smooth compactly supported function $\eta:\widetilde{M}\to\RR$ and 
a fixed point $x_0\in \widetilde{M}$.
Then $[\Gamma]=0$ in $H_0^f(\Gamma)$ where $\Gamma=\pi_1(M)$.
\end{theorem}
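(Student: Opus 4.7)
The plan is to use Theorem \ref{theorem : vanishing iff inequality holds} to reduce the claim to showing that $\Gamma$ satisfies the isoperimetric inequality $(I^{\,f}_{\partial})$, and to derive this from $(P_{\rho})$ by plugging in suitable discretized test functions. Fix a compact fundamental domain $F\subset \widetilde{M}$ for the free cocompact $\Gamma$-action, so that $\widetilde{M}$ is quasi-isometric to $\Gamma$ with any word metric and $\vert \gamma\vert \asymp d(\gamma x_0,x_0)$. For a finite set $A\subseteq \Gamma$ I will produce a smooth compactly supported $\eta_A:\widetilde{M}\to\RR$ which behaves like $\sqrt{f(d(\cdot,x_0))}$ on $\bigcup_{\gamma\in A}\gamma F$ and vanishes outside a unit neighborhood thereof, so that the weight $\rho=1/f$ cancels the amplification on the left-hand side of $(P_{\rho})$.

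Concretely, take a smooth approximation $g:\widetilde{M}\to [1,\infty)$ of $\sqrt{f(d(\cdot,x_0))}$ (obtained by convolving with a fixed bump and using $(f_1)$), together with a smooth cutoff $\chi_A:\widetilde{M}\to [0,1]$ which is $1$ on $\bigcup_{\gamma\in A}\gamma F$, is $0$ outside a unit neighborhood, has uniformly bounded gradient, and has $\vert \nabla\chi_A\vert$ supported in fundamental domains indexed by $\partial A$. Set $\eta_A=g\chi_A$. By cocompactness and $(f_1)$,
$$\int_{\widetilde{M}}\eta_A^2\,\rho\, v\ \asymp\ \sum_{\gamma\in A}\frac{f(\vert \gamma\vert)}{f(\vert \gamma\vert)}\Vol(\gamma F)\ \asymp\ \# A.$$
Expanding $\vert \nabla(g\chi_A)\vert^2\le 2g^2\vert \nabla\chi_A\vert^2+2\vert \nabla g\vert^2\chi_A^2$, the first summand integrates to a quantity comparable to $\sum_{\gamma\in \partial A}f(\vert \gamma\vert)$ (exactly the desired right-hand side), while the second contributes
$$\int \vert \nabla g\vert^2\chi_A^2\,v\ \lesssim\ \int \chi_A^2\cdot \frac{(f')^2}{f}\,v.$$

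The main obstacle — and the entire reason for the slowly growing hypothesis — is absorbing this second gradient term. Since $f$ is slowly growing, for any $\varepsilon>0$ we have $f'(t)<\varepsilon$ for $t$ outside a compact set, so $(f')^2/f\le \varepsilon^2/f$ there and hence
$$\int \vert \nabla g\vert^2\chi_A^2\, v\ \lesssim\ \varepsilon^2\int\chi_A^2\,\rho\, v\ +\ O(1)\ \lesssim\ \varepsilon^2\cdot \#A\ +\ O(1).$$
Plugging everything into $(P_{\rho})$ and taking $\varepsilon$ small enough to absorb $\varepsilon^2\#A$ into the left-hand side yields
$$\# A\ \lesssim\ \sum_{\gamma\in \partial A}f(\vert \gamma\vert)\ +\ O(1),$$
and the $O(1)$ error is harmlessly absorbed by enlarging the constant (the inequality $(I^{\,f}_{\partial})$ is trivial for uniformly bounded $A$ by bounded geometry and the fact $f\ge 1$). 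The remaining technical work lies in the careful construction of $g$ with a pointwise gradient bound of the stated form from $(f_1)$, $(f_2)$, and the slowly growing hypothesis, and in verifying that the bump $\chi_A$ can be chosen with $\vert \nabla\chi_A\vert$ truly localized to $\partial A$-translates of $F$; both are standard once the fundamental domain is fixed.
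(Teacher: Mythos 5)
Your argument is correct in outline, but it takes a genuinely different route from the paper. The paper proceeds in two steps: first a Brooks-type discretization with essentially \emph{unweighted} test functions (cutoffs of $\bigcup_{\gamma\in A}\gamma F$) shows that $(P_\rho)$ implies the discrete inequality $\sum_{x\in A}1/f(\vert x\vert)\le C\#\partial A$, which by the Block--Weinberger--Whyte criterion says exactly that the $0$-chain $[1/f]$ vanishes in $H_0^{\uf}(\Gamma)$; second, a purely homological lemma converts this to $[\Gamma]=0$ in $H_0^f(\Gamma)$ by taking a bounded $1$-chain $\phi$ with $\partial\phi=1/f$ and rescaling it to $\psi(x,y)=\phi(x,y)f(\vert(x,y)\vert)$, using the slowly-growing hypothesis to show $\partial\psi\ge C>0$ off a compact set and then invoking Lemma \ref{lemma : bounded below chains vanish then [G] vanishes}. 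You instead collapse everything into a single weighted discretization: the test function $\sqrt{f}\cdot\chi_A$ makes the weight cancel on the left of $(P_\rho)$ and produces $(I^{\,f}_{\partial})$ directly, after which Theorem \ref{theorem : vanishing iff inequality holds} finishes. The slowly-growing hypothesis enters both arguments at the analogous place --- in the paper to control the error in commuting multiplication by $f$ past $\partial$, in yours to make the cross term $\vert\nabla\sqrt f\vert^2\lesssim\varepsilon^2\rho$ absorbable --- and these are really the same estimate in discrete versus continuous form. What your route buys is that it bypasses the unpublished Block--Weinberger--Whyte theorem and the intermediate uniformly finite statement; what it costs is a more delicate test function and reliance on the nonconstructive direction (A)$\Rightarrow$(B) of Theorem \ref{theorem : vanishing iff inequality holds}, whereas the paper's route isolates a reusable lemma of independent interest. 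One point to be careful about: the pointwise bound ``$f'(t)<\varepsilon$'' is not literally available ($f$ need not be differentiable, and the slowly-growing condition only controls unit increments), so the gradient bound must be derived for the unit-scale smoothing $g$ from the difference estimate $\sqrt{f(t+1)}-\sqrt{f(t)}\le \varepsilon/(2\sqrt{f(t)})$ together with $(f_1)$; you flag this as deferred, and it does go through, but it is where the proof would break if written carelessly.
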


To prove Theorem \ref{theorem : weighted Poincare implies vanishing}
we will use techniques from a classical paper of Brooks \cite{brooks-spectrum}. We first consider
a fundamental domain $F$ for the action of the fundamental group on $\widetilde{M}$ by
taking a smooth triangulation and choosing for each $n$-simplex $\Delta$ in $M$ a simplex  $\widetilde{\Delta}$ in 
$\widetilde{M}$,
which covers $\Delta$. The fundamental domain is the union of these simplices. 

\begin{lemma}
Let $\rho:\widetilde{M}\to \RR$ be constant when restricted to $\gamma F$ for every $\gamma\in \Gamma$.
If  the continuous weighted
Poincar\'{e} inequality (\ref{equation : continuous weighted Poincare inequality}) with weight $\rho$
holds for every compactly supported smooth function $\eta:\widetilde{M}\to \RR$
then the isoperimetric  inequality 
\begin{equation}\tag{$I_{\rho}$}\label{equation : discrete weighted Poincare inequality}
\sum_{x\in A} \rho_d(x)\le C\#\partial A
\end{equation}
holds for 
every finite subset $A\subset \Gamma$ where $\rho_d(x)=\rho(xF)$.
\end{lemma}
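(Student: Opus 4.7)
The plan is the classical Brooks-type discretization: plug a smoothed indicator of $U_A := \bigcup_{x\in A} xF$ into the continuous inequality (\ref{equation : continuous weighted Poincare inequality}) and read off (\ref{equation : discrete weighted Poincare inequality}) almost term-by-term, exploiting that $\rho$ is constant on each translate of $F$.

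Concretely, for a finite $A\subseteq\Gamma$ and a small parameter $\epsilon>0$ (fixed once and for all, smaller than a geometric constant depending only on $F$), I will use the compactly supported Lipschitz function
$$
\eta_A(p) \ = \ \min\!\left(1,\ \tfrac{1}{\epsilon}\,\mathrm{dist}(p,\widetilde M\setminus U_A)\right),
$$
extending (\ref{equation : continuous weighted Poincare inequality}) from smooth to Lipschitz compactly supported functions by a standard mollification (the bound $\rho\le 1$, which follows from $f(0)=1$ and $f$ non-decreasing, makes the $L^2(\rho\,v)$-approximation automatic). By construction $\eta_A\equiv 1$ on the inner core $U_A^{\epsilon} := \{p\in U_A : B(p,\epsilon)\subseteq U_A\}$, vanishes outside $U_A$, and satisfies $|\nabla\eta_A|\le 1/\epsilon$ with support in the shell $U_A\setminus U_A^{\epsilon}$. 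Since $\rho\equiv\rho_d(x)$ on $xF$, integrating over the core yields
$$
\int_{\widetilde M}\eta_A^2\rho\,v \ \geq\ \Vol(F)\sum_{x\in A}\rho_d(x)\ -\ \Vol(U_A\setminus U_A^{\epsilon}),
$$
while trivially
$$
\int_{\widetilde M}|\nabla\eta_A|^2\,v\ \leq\ \epsilon^{-2}\,\Vol(U_A\setminus U_A^{\epsilon}).
$$
Combining both in (\ref{equation : continuous weighted Poincare inequality}) and dividing by $\Vol(F)$ yields (\ref{equation : discrete weighted Poincare inequality}), provided one can bound $\Vol(U_A\setminus U_A^{\epsilon})\leq K\epsilon\cdot\#\partial A$ uniformly in $A$.

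The main obstacle is exactly this last volume estimate, which reduces to controlling the codimension-$1$ area of the topological boundary $\partial U_A$ by the group-theoretic boundary $\#\partial A$. To handle it, I will choose the smooth triangulation of $M$ (already used to build $F$) so that $F$ is a finite union of lifted simplices; then two translates $xF$, $yF$ share a codimension-$1$ face only when $x^{-1}y$ lies in a fixed finite symmetric set $T\subseteq\Gamma$ determined by the face identifications of $F$. Each such face has uniformly bounded $(n{-}1)$-area, so $\mathrm{area}(\partial U_A)\leq C\cdot\#\{(x,y)\in A\times(\Gamma\setminus A):x^{-1}y\in T\}\leq C'\cdot\#\partial A$, after possibly enlarging the generating set used to define $\partial A$ so that it contains $T$; such an enlargement only affects the final constant in (\ref{equation : discrete weighted Poincare inequality}) by a bounded factor.
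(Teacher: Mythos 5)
Your proposal is correct and is essentially the argument the paper gives (following Brooks): test the continuous inequality against a smoothed indicator of $\bigcup_{x\in A}xF$, bound the Dirichlet energy by the volume of a collar of the topological boundary and hence by $\#\partial A$, and bound the weighted $L^2$-norm below by $\Vol(F)\sum_{x\in A}\rho_d(x)$ using that $\rho$ is constant on translates of $F$. The only differences are cosmetic --- the paper argues by contradiction with a sequence $A_i$ while you argue directly, and you spell out the face-counting estimate $\Vol(\partial U_A)\lesssim \#\partial A$ that the paper leaves implicit.
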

\begin{proof}
Assume the contrary. Then there is a sequence of sets $A_i\subset \Gamma$ such that
$$\frac{\#\partial A_i}{\sum_{x\in A_i}\rho_d(x)}\ \ \longrightarrow\ \  0.$$
As in  \cite{brooks} we can construct smooth functions
on the universal cover, which exhibit the same asymptotic behavior.
Choose $0<\varepsilon_0<\varepsilon_1<1$, where both $\varepsilon_i$ are sufficiently small,
and a function $\kappa:[0,1]\to [0,1]$ such that $\kappa(t)=1$ when $t\ge \varepsilon_1$ and $\kappa(t)=0$ when
$t\le \varepsilon_0$. 

Let $\chi_i$ be the characteristic function of $B_i=\bigcup_{\gamma\in A_i}\gamma F$
and let $\eta_i=\chi_i\,\kappa(d(x,\widetilde{M}\setminus B_i))$. Then one can estimate 
$$\int \vert \nabla\eta_i\vert^2\le C \mathrm{Vol}(\partial B_i)\le C' \#\partial A_i.$$
On the other hand 
$$\int  \eta_i^2(x) \rho(x)\ge D\sum_{\gamma\in A_i}\int_{\gamma F} \rho(x)\ge D \Vol(F)
\sum_{\gamma\in A_i}\rho_d(\gamma).$$
Consequently 
$$\frac{\int \vert \nabla\eta\vert^2}{\int \eta(x)^2\rho(x)}\ \ \longrightarrow\ \  0$$
and we reach a contradiction.
\end{proof}

We also need an unpublished theorem of Block and Weinberger (see \cite{whyte} for 
a proof).
\begin{theorem}[Block-Weinberger, Whyte]
Let $c=\sum_{x\in X}c_x[x]\in C_0^{\uf}(X)$. Then $[c]=0$ in $H_0^{\uf}(X)$ if and only
if the inequality $$\left\vert\ \sum_{x\in A}c_x\ \right\vert \le C\#\partial A$$ holds for every finite $A\subset \Gamma$
and some $C>0$.
\end{theorem}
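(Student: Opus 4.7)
The plan is to mirror the duality argument used in the proof of Theorem \ref{theorem : vanishing iff inequality holds} specialized to $f\equiv 1$, with the fundamental class $1_X$ replaced by the general $0$-chain $c$. The forward direction is straightforward: if $c=\partial\psi$ for some $\psi\in C_1^{\uf}(X)$ of propagation $R$, then for any finite $A\subset X$,
\[
\sum_{x\in A}c_x=\sum_{x\in A}\sum_{y}\bigl(\psi(y,x)-\psi(x,y)\bigr),
\]
and edges with both endpoints in $A$ (or both in $X\setminus A$) cancel. What survives is a sum over edges with one endpoint in $A$ and one at distance $\le R$ of $A^c$, each contributing at most $2\Vert\psi\Vert_\infty$. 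Bounded geometry bounds the number of such edges by a constant multiple of $\#\partial A$, yielding the required inequality with $C=2\Vert\psi\Vert_\infty$ times a geometry constant.

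For the converse, assume the inequality holds. Let $N\Delta=\{(x,y)\colon d(x,y)\le 1\}$ and define $\delta:\finsup\to\ell_1(N\Delta)$ by $\delta\eta(x,y)=\eta(y)-\eta(x)$. On the image $\delta(\finsup)$ define the linear functional $L(\delta\eta):=\langle\eta,c\rangle=\sum_x\eta(x)c_x$, which is well-defined because $\delta\eta=0$ forces $\eta$ to be constant on the (connected, quasi-geodesic) space $X$, hence zero by finite support. The key claim is continuity in the $\ell_1$-norm: $|\langle\eta,c\rangle|\le C'\Vert\delta\eta\Vert_1$. For $\eta\ge 0$ this is the co-area argument of Lemma \ref{lemma : equivalent forms of isoperimetric inequality}: approximate by dyadic level sets $\eta=\tfrac{1}{M}\sum_i 1_{A^{(i)}}$, apply the hypothesis $|\sum_{x\in A^{(i)}}c_x|\le C\#\partial A^{(i)}$ to each term, and use bounded geometry to pass from $\#\partial A^{(i)}$ to $\#\partial^{\mathrm e} A^{(i)}$, which assembles into $\Vert\delta\eta\Vert_1$.

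For signed $\eta=\eta_+-\eta_-$ the pointwise case analysis on pairs $(x,y)\in N\Delta$ (splitting on the signs of $\eta(x),\eta(y)$) gives the identity $|\delta\eta_+(x,y)|+|\delta\eta_-(x,y)|=|\delta\eta(x,y)|$, so $\Vert\delta\eta_+\Vert_1+\Vert\delta\eta_-\Vert_1=\Vert\delta\eta\Vert_1$, reducing to the non-negative case. Having established continuity, Hahn-Banach extends $L$ to a functional on $\ell_1(N\Delta)$, represented by some $\phi\in\ell_\infty(N\Delta)$. Antisymmetrize: $\psi:=\phi-\phi^T\in C_1^{\uf}(X)$. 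A duality computation identical to the last display in the proof of Theorem \ref{theorem : vanishing iff inequality holds} -- using $\delta 1_x^T=-\delta 1_x$ and $\langle\psi,\phi\rangle=\langle\psi^T,\phi^T\rangle$ -- gives $\widetilde\partial\psi(x)=2\langle\delta 1_x,\phi\rangle=2\langle 1_x,c\rangle=2c_x$, so $\partial\psi=c$ as required. The main obstacle is the continuity of $L$ for signed $\eta$, and the pointwise identity above is the crucial observation that makes the co-area argument work verbatim for arbitrary $\eta\in\finsup$.
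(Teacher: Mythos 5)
Your proof is correct. Note that the paper does not actually prove this statement --- it is quoted as an unpublished result of Block--Weinberger and the reader is referred to \cite{whyte}, where the argument is combinatorial rather than functional-analytic. What you have written is a valid alternative proof obtained by adapting the paper's own Hahn--Banach duality argument for Theorem \ref{theorem : vanishing iff inequality holds} (specialized to $f\equiv 1$), and the two places where the adaptation is genuinely non-automatic are handled correctly: (i) you define the functional $L(\delta\eta)=\langle\eta,c\rangle$ directly on $\delta\finsup$ instead of going through $(\delta^{-1})^*$, which is essential because continuity of $\delta^{-1}$ itself would amount to non-amenability, whereas only the continuity of this one functional is needed for a general $c$; and (ii) since $\eta\mapsto\langle\eta,c\rangle$ is not monotone, the reduction to non-negative $\eta$ cannot be done by the paper's trick of replacing $\eta$ by $|\eta|$, and your pointwise identity $|\delta\eta_+(x,y)|+|\delta\eta_-(x,y)|=|\delta\eta(x,y)|$ is exactly the right substitute (it holds because $\eta_+$ and $\eta_-$ have disjoint supports, so on any pair $(x,y)$ the increments of $\eta_+$ and $\eta_-$ have opposite signs or one vanishes). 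The forward direction and the final antisymmetrization/duality computation are the same as in the paper. This is in fact the argument the paper gestures at in the remark following the proof of Theorem \ref{theorem : vanishing iff inequality holds} about $\widetilde{\partial}$ admitting a right inverse, made precise for an arbitrary $0$-chain $c$; your route buys a uniform treatment of both theorems at the cost of being non-constructive, while Whyte's original proof produces the bounding $1$-chain more explicitly.
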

We observe that by inequality 
(\ref{equation : discrete weighted Poincare inequality})
and the above theorem the class represented by 
$c=\sum_{x\in X}\rho_d(x)[x] \in C_0^{\uf}(X)$ 
vanishes in the uniformly  finite homology group.  We will denote 
by $[1/f]$ the homology class represented by the chain $\sum_{x\in X}1/f(\vert x\vert)$.
Theorem 
\ref{theorem : weighted Poincare implies vanishing} now follows from the following

\begin{lemma}
Let $f$ be non-decreasing slowly growing function.
If $\left[1/f\right]=0$ in $H_0^{\uf}(X)$ then $[X]=0$ in $H_0^f(X)$.
\end{lemma}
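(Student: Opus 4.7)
The plan is to rescale the given witness of $[1/f]=0$ by $f$ in order to produce an $f$-controlled $1$-chain whose boundary is, up to a small correction we can handle, the fundamental class $[X]$. Pick $\phi\in C_1^{\uf}(X)$ with $\partial\phi=\sum_x\frac{1}{f(|x|)}[x]$, say $|\phi(x,y)|\le K$ and $\mathscr{P}(\phi)\le R$. Multiply $\phi$ by the symmetric weight $g(x,y)=\tfrac12(f(|x|)+f(|y|))$ and set $\tilde\phi(x,y)=g(x,y)\phi(x,y)$. Since $g$ is symmetric and $\phi$ antisymmetric, $\tilde\phi$ is antisymmetric; and $|\tilde\phi(x,y)|\le K\max(f(|x|),f(|y|))=Kf(|(x,y)|)$ guarantees $\tilde\phi\in C_1^f(X)$.

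Next, expand the coefficient of $[z]$ in $\partial\tilde\phi$ by writing $g(y,z)=f(|z|)+\tfrac12(f(|y|)-f(|z|))$: the main term contributes $f(|z|)\cdot\partial\phi(z)=f(|z|)\cdot 1/f(|z|)=1$, while the remainder is the error
\[
E(z)=\tfrac12\sum_{d(y,z)\le R}\bigl(f(|y|)-f(|z|)\bigr)\phi(y,z),
\]
so that $\partial\tilde\phi(z)=1+E(z)$. Here the slowly growing hypothesis is essential: given any $\varepsilon>0$, for $|z|\ge t_0+R$ one has $|f(|y|)-f(|z|)|\le 2R\varepsilon$ whenever $d(y,z)\le R$ (by iterating $f(t+1)\le f(t)+\varepsilon$ starting from $t=|z|-R\ge t_0$). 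Combined with bounded geometry $\#B(z,R)\le N_R$, this forces $|E(z)|\le N_RKR\varepsilon$. Taking $\varepsilon$ small enough, we have $|E(z)|<1/2$ outside the finite set $B=\{z:|z|<t_0+R\}$; on $B$, $E$ is merely bounded.

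To kill the finite remainder $E|_B=\sum_{z\in B}E(z)[z]$, for each $z\in B$ fix an infinite $1$-step ray $z=z_0,z_1,z_2,\dots$ in $X$ (which exists by quasi-geodesicity and infiniteness) and form the tail $t_z=\sum_{n\ge 0}[z_n,z_{n+1}]$; it has propagation $1$, coefficients $\pm 1$, and $\partial t_z=-[z]$. The finite linear combination $\eta=-\sum_{z\in B}E(z)\,t_z$ is then a uniformly finite $1$-chain (hence lying in $C_1^f(X)$) with $\partial\eta=E|_B$. Setting $\psi=\tilde\phi-\eta\in C_1^f(X)$ yields $\partial\psi=1_X+E|_{X\setminus B}$, whose coefficients lie in $[1/2,3/2]$. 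Since $\partial\psi$ is a boundary in $C_1^f(X)$ with coefficients uniformly bounded below by $1/2>0$, Lemma~\ref{lemma : bounded below chains vanish then [G] vanishes} applies and delivers $[X]=0$ in $H_0^f(X)$.

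The main technical obstacle is controlling the error $E(z)$: the whole rescaling argument only converts $\partial\phi=1/f$ into $\partial\tilde\phi\approx 1$ because the symmetric multiplier $g$ is nearly constant on $R$-neighborhoods, and this near-constancy is precisely the content of the slowly growing hypothesis on $f$. The auxiliary issue near the origin---where slowly growing says nothing and $E$ could be as large as the bound $N_RKf(t_0+R)$---is handled by the standard observation that compactly supported $0$-chains are automatically boundaries of uniformly finite $1$-chains built from finitely many escape tails.
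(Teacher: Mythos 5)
Your proposal is correct and follows essentially the same route as the paper: rescale the uniformly finite witness for $[1/f]=0$ by a symmetric weight comparable to $f$, use the slowly growing hypothesis to show the boundary of the rescaled chain is uniformly close to $1_X$ outside a finite set, repair the finite set with tails, and conclude via Lemma \ref{lemma : bounded below chains vanish then [G] vanishes}. The only cosmetic differences are your choice of weight $\tfrac12(f(|x|)+f(|y|))$ in place of $f(|(x,y)|)=f(\max(|x|,|y|))$ and your slightly more explicit bookkeeping of the error term.
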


\begin{proof}
Let $1/f(\vert x\vert)=\partial \phi(x)$, where $\phi\in C_0^{uf}(X)$ and $\mathscr{P}(\phi)=1$.
Let $\psi(x,y)=\phi(x,y)f(\vert x,y\vert)$. Then $\psi\in C_1^f(X)$ and
\begin{eqnarray*}
\partial \psi(x)&=&\sum_{y\in B(x,1)}\phi(y,x)f(\vert(x,y)\vert)\\\\
&=&\sum_{y\,(+)}\phi(y,x)f(\vert(x,y)\vert)-\sum_{y\,(-)}\phi(x,y)f(\vert(x,y)\vert)
\end{eqnarray*}
where $(+)$ and $(-)$  denote sums over edges $(y,x)$ with positive coefficients and negative 
coefficients respectively.
Continuing we have
\begin{eqnarray*}
&\ge&\sum_{y\,(+)} \phi(y,x)f(\vert x\vert)-\sum_{y\,(-)}\phi(x,y)f(\vert x\vert+1)
\end{eqnarray*}
Let $\mathcal{N}$ be such that $\#B(x,1)\le \mathcal{N}$
for every $x\in X$.  Fix $0<C<1$ and let $\varepsilon>0$ be such that 
$1-\varepsilon\Vert\phi\Vert_{\infty}\, \mathcal{N}>C$.
By assumption there exists a compact set 
$K\subset X$ such that outside of $K$
we have $f(\vert x\vert+1)\le f(\vert x\vert)+\varepsilon$.
By factoring $f(\vert x\vert)$ from first two sums we have
\begin{eqnarray*}
&=&f(\vert x\vert)\left(\sum_{y\in B(x,1)} \phi(y,x)\right)-\varepsilon\left(\sum_{y\,(-)}\phi(x,y)\right)\\\\
&\ge&\partial\phi(x)f(\vert x\vert)-\varepsilon\left(\sum_{y\,(-)}\phi(x,y)\right)\\\\
&\ge&1-\varepsilon\Vert\phi\Vert_{\infty}\, \mathcal{N}\\\\
&\ge&C
\end{eqnarray*}
for every $x\in X\setminus K$. 
To achieve the same lower bound on $K$ we can add finitely many tails $t_x$ to $\psi$ to 
assure $\partial\psi(x)\ge C$ on $K$. We do not alter any of the previously
ensured properties since $K$ is finite. The 1-chain $\psi'$ that we obtain in this way 
satisfies
$\psi'\in C_1^f(X)$ and $\partial\psi'(x)\ge C$, thus by Lemma \ref{lemma : bounded below chains vanish then [G] vanishes} the fundamental class vanishes as well. 
\end{proof}

Theorem \ref{theorem : weighted Poincare implies vanishing} can be generalized to
open manifolds under suitable assumptions. We leave the details to the reader.

\begin{remark}\normalfont
We would like to point out that it is not hard to show that the isoperimetric inequality
(\ref{equation : Zuk-type isoperimetric inequality}) is implied by a discrete inequality 
$$\sum_{x\in X}\vert \eta(x)\vert^2\le C\sum_{d(x,y)\le 1}\vert \eta(x)-\eta(y)\vert^2f(\vert(x,y)\vert)$$
for all finitely supported $\eta:X\to\RR$. The latter can be interpreted as the existence of a 
spectral gap for a weighted discrete Laplace operator.
\end{remark}

\section{Primitives of differential forms}
Sullivan in \cite{sullivan} studied the growth of a primitive of a differential form 
and asked a question about the connection of a certain isoperimetric inequality and
the existence of a bounded primitive of the volume form on a non-compact manifold. 
This question was later answered by Gromov \cite{gromov-proceedings} and other proofs are
provided by Brooks \cite{brooks} and Block and Weinberger \cite{block-weinberger}.
For unbounded primitives this question was studied by Sikorav \cite{sikorav}, and \.{Z}uk \cite{zuk}.
 
As an application of the controlled coarse homology we obtain precise estimates on 
growth of primitives of the volume form on 
covers of compact Riemannian manifolds.

\begin{theorem}
Let $N$ be an open, complete Riemannian manifold of bounded geometry and let $X\subset N$ be 
a discrete subset of $N$ which is quasi-isometric to $N$. Then 
$[X]=0$ in $H_0^{f}(X)$ if and only if the volume form on $N$ has a primitive
of growth controlled by $f$.
\end{theorem}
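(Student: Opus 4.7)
\smallskip
\noindent\textbf{Proof proposal.}
The plan is to translate between controlled coarse $1$-chains on $X$ and $(n-1)$-forms on $N$ (where $n=\dim N$) via a bounded-geometry cellular decomposition of $N$ indexed by $X$. With this dictionary in hand, the two implications become Stokes's theorem and a smoothing/patching construction, closely paralleling the constant-$f$ argument of \cite{block-weinberger} and the linear case of \cite{sikorav}. First I would fix a partition $N=\bigsqcup_{x\in X}V_x$ into Voronoi-type cells, each containing $x$; since $X$ is quasi-isometric to $N$ and both have bounded geometry, the $V_x$ have uniformly bounded diameter and their volumes $v_x=\Vol(V_x)$ are pinched between two positive constants. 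By Lemma~\ref{lemma : bounded below chains vanish then [G] vanishes}, $[X]=0$ in $H_0^f(X)$ if and only if the chain $\sum_x v_x[x]$ is a boundary, so I may pass freely between these two representatives.

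For the direction \emph{primitive implies vanishing}, assume a primitive $\omega$ of the volume form $v$ with pointwise bound $\|\omega(p)\|\le Cf(d(p,x_0))$. For each pair of adjacent cells sharing a codimension-$1$ face $\Sigma_{xy}$, set $\psi(x,y)=\int_{\Sigma_{xy}}\omega$ with the orientation induced from $V_x$. Anti-symmetry and propagation $1$ are automatic; since $\Sigma_{xy}$ lies within uniformly bounded distance of $x$, condition $(f_1)$ yields $|\psi(x,y)|\le C'f(|x|)$, so $\psi\in C_1^f(X)$. Stokes's theorem applied to each $V_x$ then gives $\partial\psi(x)=\int_{\partial V_x}\omega=\int_{V_x}d\omega=v_x$, which finishes this direction.

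For the converse, start with $\psi\in C_1^f(X)$ of propagation $1$ (the analogue of Lemma~\ref{lemma : cutting down propagation of preimages} is routine) satisfying $\partial\psi=\sum_x v_x[x]$. Fix once and for all smooth $(n-1)$-forms $\eta_{xy}=-\eta_{yx}$ supported in uniformly bounded tubes around the segments from $x$ to $y$, with $d\eta_{xy}=\beta_y-\beta_x$ for unit-mass bumps $\beta_x$ localized near $x$. Setting $\omega_0=\sum\psi(x,y)\eta_{xy}$ we obtain $d\omega_0=\sum_x v_x\beta_x$. The difference $v-\sum_x v_x\beta_x$ is a uniformly bounded $n$-form with zero mass on every $V_x$, hence it admits a uniformly bounded primitive $\tau$; then $\omega=\omega_0+\tau$ is a primitive of $v$. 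Bounded geometry ensures that at each $p\in N$ only boundedly many tubes contain $p$, and each of the corresponding $x$ satisfies $|x|\le d(p,x_0)+O(1)$, so condition $(f_1)$ gives $\|\omega(p)\|\le C''f(d(p,x_0))$, as required.

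The main obstacle is the bounded-correction step $v-\sum_x v_x\beta_x=d\tau$: produced naively cell-by-cell via the Poincar\'e lemma, the local primitives need not patch to a globally bounded one, and the right tool is the Block--Weinberger dictionary between bounded $n$-forms of vanishing uniformly finite discretization and bounded $(n-1)$-form primitives (which is exactly the $f\equiv\mathrm{const}$ case of the present theorem). Once $\tau$ is in hand, the remaining estimates are bookkeeping, and, relative to \cite{block-weinberger,sikorav}, no new analytic ingredient is needed beyond tracking the $f$-growth through a smearing of fixed bounded size.
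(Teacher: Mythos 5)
Your proof is essentially correct, but it takes a noticeably different route from the paper's on both implications, so let me compare. The paper does not use Voronoi cells: it takes a maximal $\kappa$-separated net $X$ and a partition of unity $\{\varphi_x\}$ subordinate to the cover by $\kappa$-balls, setting $v_x=\varphi_x v$. Because $\sum_x v_x=v$ exactly, the chain-to-form direction reduces to choosing uniformly supported $(n-1)$-forms $\omega_{(x,y)}$ with $d\omega_{(x,y)}=v_x-v_y$ (a uniform compactly supported Poincar\'e lemma on balls of bounded geometry) and setting $\omega=\sum\psi(x,y)\omega_{(x,y)}$, which gives $d\omega=\sum_x\widetilde{\partial}\psi(x)\,v_x=2v$ with no correction term. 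Your version, with numerical weights $v_x=\Vol(V_x)$ and unit bumps $\beta_x$, forces the extra step of finding a bounded primitive $\tau$ of $v-\sum_x v_x\beta_x$; you correctly identify this as the delicate point and the right tool, but note that executing it cleanly (smoothing the cell decomposition so the pieces are compactly supported forms of zero integral) essentially reproduces the paper's partition-of-unity setup, so the paper's choice simply absorbs your obstacle into the initial normalization. In the other direction your argument is more constructive than the paper's: you build the bounding $1$-chain directly by integrating $\omega$ over the codimension-one faces and applying Stokes cell by cell, whereas the paper derives the isoperimetric inequality $(I^{\,f}_{\partial})$ from Stokes and then invokes Theorem \ref{theorem : vanishing iff inequality holds}, which is an existence statement. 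Your route buys an explicit chain; the paper's buys brevity by leaning on its main theorem. One small point to tighten: passing from $[X]=0$ to $\bigl[\sum_x v_x[x]\bigr]=0$ is not automatic in controlled homology (unlike the uniformly finite case, $H_0^f$ need not vanish when $[X]$ does); you should either invoke the corollary to Theorem \ref{theorem : vanishing iff inequality holds} stating that $[X]=0$ forces the map $H_0^{\uf}(X)\to H_0^f(X)$ to be trivial, or weight the integer tails from the proof of Lemma \ref{lemma : bounded below chains vanish then [G] vanishes} by the bounded coefficients $v_x$. Lemma \ref{lemma : bounded below chains vanish then [G] vanishes} itself, which you cite, covers only the converse passage.
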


\begin{proof}
First note that by quasi-isometry invariance of the controlled homology (corollary \ref{corollary : QI invariance}), it is sufficient to prove the statement for any $X$ that is quasi-isometric to $N$.

We follow Whyte's proof \cite[Lemma 2.2.]{whyte-ahat}. Let $\kappa>0$ be smaller
than the convexity radius of $N$. We choose a maximal $\kappa$-separated subset
 $X\subset N$
 and consider the partition of unity $\set{\varphi_x}_{x\in X}$ associated to the cover by balls centered 
 at points of $X$ of radius $\kappa$.  Denote $v_x=\varphi_x v$.

Given $\psi\in C_1^f(X)$  of propagation $\mathscr{P}(\psi)\le r$, which bounds the fundamental 
class we note that $v_x-v_y=d\omega_{(x,y)}$, where $\omega_{(x,y)}$ are 
 $(n-1)$-forms of uniformly bounded supports. If we let $\omega=\sum_{d(x,y)\le r}\psi(x,y)\omega_{(x,y)}$
 then we have

$$d\omega=\sum_{d(x,y)\le r}\psi(x,y)(v_x-v_y)=\sum_{x\in X} \widetilde{\partial}\psi_x v_x=2v.$$

On the other hand, if $v=d\omega$ and $\vert\omega\vert\le Cf$ then it follows from Stokes' theorem
that the isoperimetric inequality with $f$ holds for $X$. By Theorem 
\ref{theorem : vanishing iff inequality holds} this implies vanishing of the fundamental class
in $H_0^f(X)$.
\end{proof}

The above gives a different proof of a theorem of Sikorav \cite{sikorav}  and also explains the nature of
the extra constants appearing in the formulation of the main theorem of that paper. These constants
reflect the fact that the growth of the primitive of a differential form is of large-scale
geometric nature.

\smallskip

For the rest of this section, we fix the notation as follows:
$M$ is a compact manifold with a universal cover $\widetilde{M}$
and fundamental group $\pi_1(M)$.

\begin{corollary}
Let $M$ be a compact manifold with $\Gamma=\pi_1(M)$.
Then $[\Gamma]=0$ in $H_0^f(\Gamma)$ if and only if the volume form on the universal cover
$\widetilde{M}$ has a primitive whose growth is controlled by $f$.
\end{corollary}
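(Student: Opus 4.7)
The plan is to reduce this corollary directly to the preceding theorem via the \v{S}varc-Milnor lemma. First, since $M$ is compact, its universal cover $\widetilde M$ is a complete Riemannian manifold of bounded geometry, because the Riemannian metric on $\widetilde M$ is the pullback of the metric on $M$ and $\Gamma$ acts by isometries, so curvature, injectivity radius, and volumes of small balls are all controlled uniformly in terms of their counterparts on the compact quotient. Thus $\widetilde M$ meets the hypotheses for $N$ in the previous theorem.

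Next, I would fix a base point $x_0\in \widetilde M$ and consider the orbit $X=\Gamma\cdot x_0\subset \widetilde M$. Since $\Gamma$ acts freely, properly discontinuously, cocompactly and isometrically on $\widetilde M$, the orbit $X$ is a discrete subset and the orbit map $\gamma\mapsto \gamma x_0$ is a quasi-isometry $\Gamma\to X\hookrightarrow \widetilde M$ by the standard \v{S}varc-Milnor argument: cocompactness forces every point of $\widetilde M$ to lie at bounded distance from $X$, and properness together with a compact fundamental domain gives the two-sided linear control on distances. In particular, $X$ is quasi-isometric to $\widetilde M$ and satisfies the bounded geometry and quasi-geodesicity assumptions needed to apply the homology machinery.

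By Corollary \ref{corollary : QI invariance}, the quasi-isometry $\Gamma \to X$ induces an isomorphism $H_0^f(\Gamma)\cong H_0^f(X)$, and since the orbit map is a bijection onto $X$ (the action being free) the induced chain map sends $[\Gamma]=\sum_{\gamma\in\Gamma}[\gamma]$ to $[X]=\sum_{x\in X}[x]$. Hence $[\Gamma]=0$ in $H_0^f(\Gamma)$ if and only if $[X]=0$ in $H_0^f(X)$. Applying the previous theorem to $N=\widetilde M$ and this $X$ then gives the equivalence with the existence of a primitive of the volume form on $\widetilde M$ of growth controlled by $f$, completing the proof.

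The only thing worth double-checking is the identification of the fundamental classes under the quasi-isometry, but this is automatic because the orbit map is a bijection $\Gamma\to X$ of bounded distortion, so no bookkeeping on coefficients is needed beyond what Corollary \ref{corollary : QI invariance} already provides.
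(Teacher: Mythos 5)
Your proposal is correct and is exactly the argument the paper intends: the corollary is stated as an immediate consequence of the preceding theorem, obtained by taking $N=\widetilde M$ and $X$ a $\Gamma$-orbit, with the \v{S}varc--Milnor lemma supplying the quasi-isometry $\Gamma\to X\subset\widetilde M$ and Corollary \ref{corollary : QI invariance} identifying $H_0^f(\Gamma)$ with $H_0^f(X)$ so that $[\Gamma]$ corresponds to $[X]$. No gaps.
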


For various classes of groups we obtain specific estimates based on the results from
previous sections. For instance for polycyclic groups we have the 
following

\begin{corollary}
Let $\pi_1(M)$ be an infinite, polycyclic group. Then 
the primitive of the volume form on $\widetilde{M}$ has exactly linear growth.
\end{corollary}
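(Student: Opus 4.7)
The plan is a direct two-sided argument which combines the preceding corollary (identifying growth of primitives of the volume form on $\widetilde M$ with vanishing of the fundamental class in $H_0^f(\Gamma)$) with the polycyclic corollary of Section \ref{section : examples} (which says that, for infinite polycyclic $\Gamma$, $[\Gamma]=0$ in $H_0^f(\Gamma)$ if and only if $f$ is linear). The task is simply to match these two statements to read off both the upper and lower bounds on the growth of a primitive of the volume form.

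For the upper bound, I would observe that $\Gamma=\pi_1(M)$ is infinite and finitely generated, so Theorem \ref{theorem : linear preimage via the spread tail construction} gives $[\Gamma]=0$ in $H_0^{\lin}(\Gamma)$ unconditionally. Feeding $f(t)=t$ into the previous corollary then produces a primitive $\omega$ of the volume form on $\widetilde{M}$ whose growth is controlled by a linear function. This furnishes the existence half of the claim.

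For the lower bound I would argue by contrapositive. Suppose there were a primitive of the volume form whose growth is controlled by some non-decreasing function $g$ satisfying the structural conditions $(f_1)$ and $(f_2)$, with $g(t)/t\to 0$. By the previous corollary this would force $[\Gamma]=0$ in $H_0^g(\Gamma)$. But the polycyclic corollary then forces $g$ to be linear, contradicting the assumption that $g$ was strictly sublinear. Hence no primitive of growth strictly slower than linear can exist.

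I do not expect any real obstacle: all the substantive work (the spread-tail construction, the equivalence between the isoperimetric inequality and the homological vanishing, and the linear isodiametric estimate for polycyclic groups via \cite{pittet}) has already been done in the preceding sections, so this proof is really a book-keeping exercise. The only minor care needed is to ensure that any hypothetical sublinear control function can be replaced, without loss of generality, by one satisfying the mild doubling/shift conditions $(f_1)$ and $(f_2)$ that are required to set up the homology $H_0^g$; this is a routine smoothing step since every sublinear growth type of interest (powers $t^\alpha$ with $\alpha<1$, logarithms, iterated logarithms, etc.) already satisfies these conditions.
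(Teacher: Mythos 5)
Your proposal is correct and follows exactly the route the paper intends: the corollary is stated without proof as an immediate consequence of combining the corollary identifying vanishing of $[\Gamma]$ in $H_0^f(\Gamma)$ with the existence of a primitive of the volume form of growth $f$, and the polycyclic corollary of Section \ref{section : examples} (via the linear isodiametric profile of \cite{pittet}). Your extra remark about replacing a hypothetical sublinear control function by a majorant satisfying $(f_1)$ and $(f_2)$ is a reasonable and correctly handled point of care.
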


For groups with finite asymptotic dimension the dichotomy between amenable and non-amenable 
groups is manifested in a gap between possible growth type of the primitives of volume forms. 
We discuss only the Baumslag-Solitar groups 
$$\mathrm{BS}(m,n)=\langle a,b\ \vert\ ab^ma^{-1}=b^n\rangle,$$
which have finite asymptotic
dimension of linear type and are either solvable or have a non-abelian 
free subgroup.

\begin{corollary}
Let $M$ be a compact manifold with $\pi_1(M)=\mathrm{BS}(m,n)$. 
Then a primitive of the volume form on $\widetilde{M}$ has
\begin{enumerate}
\item bounded growth if $\pi_1(M)$ is non-amenable i.e. $\vert m\vert\neq 1\neq \vert n\vert$,
\item  linear growth if $\pi_1(M)$ is amenable i.e., $\vert m\vert=1$ or $\vert n\vert=1$.
\end{enumerate}
\end{corollary}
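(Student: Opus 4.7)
The plan is to invoke the preceding corollary, which identifies primitives of the volume form on $\widetilde{M}$ of growth controlled by $f$ with vanishing of $[\Gamma]$ in $H_0^f(\Gamma)$ for $\Gamma=\pi_1(M)=\mathrm{BS}(m,n)$. Thus the statement reduces to identifying the optimal growth type of $f$ which kills the fundamental class of $\mathrm{BS}(m,n)$.

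For (1), the non-amenable case $|m|,|n|\geq 2$: these Baumslag--Solitar groups contain a free non-abelian subgroup and are therefore non-amenable. By \cite[Theorem 3.1]{block-weinberger} non-amenability of $\Gamma$ is equivalent to $[\Gamma]=0$ in $H_0^{\uf}(\Gamma)$, i.e.\ with $f$ constant. Feeding this into the preceding corollary immediately produces a primitive of bounded growth.

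For (2), the amenable case $|m|=1$ or $|n|=1$: by symmetry assume $|m|=1$, so that $\mathrm{BS}(1,n)$ is an infinite, finitely generated, solvable (hence amenable) group. Theorem \ref{theorem : linear preimage via the spread tail construction} gives $[\Gamma]=0$ in $H_0^{\lin}(\Gamma)$, producing a primitive of at most linear growth. The matching lower bound will exploit the fact noted just before the statement that $\mathrm{BS}(1,n)$ has finite asymptotic dimension of linear type. The proposition of Section \ref{section : examples} shows that whenever $[\Gamma]=0$ in $H_0^f(\Gamma)$ we have $\isod_\Gamma\leq Cf$, so it is enough to establish the linear lower bound $\isod_\Gamma(R)\gtrsim R$. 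This is the geometric content of the linear-type covering hypothesis: a uniformly bounded-multiplicity cover of $B(e,R)$ by sets of diameter $\leq CR$ produces, by a standard averaging, finite subsets $F$ of radius $\leq C'R$ with $\#F/\#\partial F \gtrsim R$ (compare the discussion of the invariant $\mathrm{A}_X$ and the isodiametric profile in \cite{nowak-exactiso}). Consequently every admissible $f$ is at least linear, ruling out strictly sublinear primitives.

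The principal obstacle will be the linear lower bound $\isod_\Gamma(R)\gtrsim R$ on $\mathrm{BS}(1,n)$; everything else is bookkeeping with the homological equivalences already established, once the free-subgroup structure in case (1) and the solvability in case (2) are in hand. In practice one would either extract the required estimate from quantitative versions of finite asymptotic dimension of linear type in \cite{nowak-exactiso}, or give a hands-on construction of F\o lner-type subsets using the semidirect-product description $\mathrm{BS}(1,n)\cong \ZZ[1/n]\rtimes \ZZ$.
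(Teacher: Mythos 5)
Your overall architecture is the one the paper intends (the paper itself states this corollary without proof, leaving it to the preceding results): part (1) via the free subgroup, non-amenability, and \cite[Theorem 3.1]{block-weinberger}; the upper bound in (2) via Theorem \ref{theorem : linear preimage via the spread tail construction}; and the lower bound in (2) via the Proposition $\isod_\Gamma\le Cf$ from Section \ref{section : examples}, so that everything reduces to showing $\isod_{\mathrm{BS}(1,n)}(R)\gtrsim R$.

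That last step is where you have a genuine gap, and the mechanism you propose for it would fail. You claim that a uniformly bounded-multiplicity cover of $B(e,R)$ by sets of diameter $\le CR$ produces, ``by a standard averaging,'' finite sets $F$ of radius $\lesssim R$ with $\#F/\#\partial F\gtrsim R$. As stated this is false: the free group $F_2$ admits exactly such covers (it has asymptotic dimension $1$ of linear type), yet, being non-amenable, satisfies $\#F/\#\partial F\le C$ for \emph{every} finite set, so no averaging of cover data alone can manufacture sets with large volume-to-boundary ratio. Amenability must enter the averaging in an essential way --- e.g.\ one turns the linear-type cover into a partition of unity with Lipschitz constant $\sim 1/R$ and supports of diameter $\sim R$, averages it against an invariant mean to get a finitely supported almost-invariant $\ell_1$-function of support radius $\sim R$, and then runs the co-area argument (as in Lemma \ref{lemma : equivalent forms of isoperimetric inequality}) to extract a level set $F$ with $\#\partial F/\#F\lesssim 1/R$; this is the content of the relation between $\isod$ and the function $\mathrm{A}_X$ of \cite{nowak-exactiso} alluded to in the paper. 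Your proposed fallback --- an explicit F\o lner construction in $\ZZ[1/n]\rtimes\ZZ$ --- is in fact the cleanest fix and should be carried out rather than deferred: the sets $F_k=\{\,b^xa^j : 0\le j<k,\ x\in[0,n^k)\cap n^{-k}\ZZ\,\}$ have radius $O(k)$ (base-$n$ expansions give word length $O(k)$ for such $b^x$) and $\#\partial F_k/\#F_k\asymp 1/k$, which gives $\isod(R)\gtrsim R$ and hence, via the Proposition, rules out any sublinear $f$ and any sublinear primitive.
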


\section{Other applications and final remarks}

\subsection{Distortion of subgroups} Let $G\subseteq \Gamma$ be a subgroup. Then the inclusion is always a coarse embedding and 
it is often a question whether 
$G$ is undistorted in $\Gamma$, that is the inclusion is a quasi-isometric embedding.
We have the following
\begin{proposition}
Let $G\subseteq \Gamma$ be a inclusion of  a subgroup with both $G$ and $\Gamma$ finitely
generated. If $[G]=0$ in $H_0^{f}(G)$ then $[\Gamma]=0$ in $H_0^{f\circ \rho_-^{-1}}(\Gamma)$.
\end{proposition}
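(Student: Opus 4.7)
The strategy is to exploit the left coset decomposition $\Gamma = \bigsqcup_{\alpha} \gamma_\alpha G$ and to transfer a bounding $1$-chain from $G$ to each coset. First I would choose the coset representatives $\gamma_\alpha$ so that each is a \emph{shortest} element of its coset, i.e.\ $|\gamma_\alpha|_\Gamma \le |\gamma_\alpha g|_\Gamma$ for all $g \in G$. Given $\psi \in C_1^f(G)$ with $\partial \psi = [G]$, the first step is to push $\psi$ forward via the coarse embedding $i : G \hookrightarrow \Gamma$, obtaining $i_*\psi \in C_1^{f \circ \rho_-^{-1}}(\Gamma)$ as in the construction preceding Corollary~\ref{corollary : QI invariance}; this chain is supported on $G \subset \Gamma$, has finite propagation in $\Gamma$ (the inclusion is Lipschitz), and satisfies $\partial (i_*\psi) = \sum_{g\in G}[g]$.

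For each coset representative $\gamma_\alpha$, form the left translate $\psi_\alpha(x,y) := (i_*\psi)(\gamma_\alpha^{-1}x,\, \gamma_\alpha^{-1}y)$; it is supported on $\gamma_\alpha G$ and has $\partial \psi_\alpha = \sum_{\gamma \in \gamma_\alpha G}[\gamma]$. Because left multiplication is an isometry of $\Gamma$ and the cosets partition $\Gamma$, the supports of the various $\psi_\alpha$ are pairwise disjoint subsets of $\Gamma \times \Gamma$, so $\Psi := \sum_{\alpha} \psi_\alpha$ is defined unambiguously edge-by-edge, inherits the same (finite) propagation bound as $i_*\psi$, and telescopes to $\partial \Psi = [\Gamma]$.

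The heart of the argument --- and the only non-formal point --- is checking that $\Psi$ lies in $C_1^{f \circ \rho_-^{-1}}(\Gamma)$. The coefficient at $(\gamma_\alpha g_1, \gamma_\alpha g_2)$ equals $\psi(g_1,g_2)$, which is bounded by $K\, f(|(g_1,g_2)|_G) \le K\, f(\rho_-^{-1}(|(g_1,g_2)|_\Gamma))$, the estimate being stated in terms of the \emph{untranslated} points. The shortest-representative choice enters here: from $|\gamma_\alpha|_\Gamma \le |\gamma_\alpha g|_\Gamma$ and the triangle inequality one has $|g|_\Gamma \le 2|\gamma_\alpha g|_\Gamma$, whence $|(g_1,g_2)|_\Gamma \le 2\,|(\gamma_\alpha g_1, \gamma_\alpha g_2)|_\Gamma$. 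Invoking the doubling property (\ref{equation : f(Ct) le Df(t)}) applied to $f \circ \rho_-^{-1}$ absorbs the factor of $2$ into a single constant $K'$ with
$$
|\Psi_{(x,y)}| \;\le\; K'\, (f \circ \rho_-^{-1})\bigl(|(x,y)|_\Gamma\bigr)
$$
for all edges $(x,y)$. This places $\Psi$ in $C_1^{f \circ \rho_-^{-1}}(\Gamma)$, and therefore $[\Gamma] = 0$ in $H_0^{f \circ \rho_-^{-1}}(\Gamma)$.
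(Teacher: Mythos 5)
Your proposal is correct and follows essentially the same route as the paper's (very terse) proof: push the chain forward along the inclusion and then sum left translates over a coset decomposition. You additionally supply the one detail the paper leaves as "straightforward" --- choosing shortest coset representatives so that $|(g_1,g_2)|_\Gamma \le 2|(\gamma_\alpha g_1,\gamma_\alpha g_2)|_\Gamma$, which together with the doubling condition on the control function (needed anyway for $H_0^{f\circ\rho_-^{-1}}(\Gamma)$ to be defined) keeps the translated chain in $C_1^{f\circ\rho_-^{-1}}(\Gamma)$.
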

\begin{proof}
It is straightforward to check that the pushforward of a $1$-chain that bounds the fundamental class $[G]$ in $H_0^f(G)$ will have growth controlled by $f\circ \rho_-^{-1}$ on $\Gamma$. We then partition $\Gamma$ into $G$-cosets, and construct a $1$-chain that bounds $[\Gamma]$ as the sum of translates of the chain that bounds $[G]$.
\end{proof}
In particular if $G$ is undistorted in $\Gamma$, then the fundamental classes should vanish in 
appropriate groups with the same control function $f$. This fact can be used to estimate the 
distortion of a subgroup or at least to decide whether a given subgroup can be an undistorted 
subgroup of another group, however except the iterated wreath
products mentioned earlier we do not know  examples in which explicit computation would 
be possible.

\subsection{Questions}
A natural question is whether one can introduce a homology or cohomology
theory which would in a similar way reflect isoperimetric properties of amenable actions. Isoperimetric
inequalities for such actions are discussed in \cite{nowak-isoactions}.\\

\end{document}